\newtheorem{theorem}{Theorem}
\newtheorem{lemma}{Lemma}
\newtheorem{corollary}{Corollary}
\newtheorem{proposition}{Proposition}
\begin{document}

\title[Geometric sufficient conditions for compactness]{Geometric sufficient conditions for compactness of the complex Green operator}

\author{Samangi Munasinghe}
\author{Emil J. Straube}
\address{Department of Mathematics and computer Science\\
Western Kentucky University\\
Bowling Green, Kentucky, 42101}
\address{Department of Mathematics\\
Texas A\&M University\\
College Station, Texas, 77843}
\email{samangi.munasinghe@wku.edu}
\email{straube@math.tamu.edu}

\thanks{2000 \emph{Mathematics Subject Classification}: 32W10, 32V99}
\keywords{Complex Green operator, CR-submanifold of hypersurface type, $\overline{\partial}_{b}$, geometric conditions for compactness, complex tangential flow}
\thanks{Research supported in part by NSF grant DMS 0758534. This research was begun during a visit by the first author to the Department of Mathematics at Texas A\&M University. She thanks the department for its hospitality.}


\begin{abstract}
We establish compactness estimates for $\overline{\partial}_{M}$ on a compact pseudoconvex CR-submanifold $M$ of $\mathbb{C}^{n}$ of hypersurface type that satisfies the (analogue of the) geometric sufficient conditions for compactness of the $\overline{\partial}$-Neumann operator given in \cite{Straube04, MunasingheStraube07}. These conditions are formulated in terms of certain short time flows in complex tangential directions. 

\end{abstract}

\maketitle

\section{Introduction}\label{intro}
The $\overline{\partial}$-complex on $\mathbb{C}^{n}$ induces the (extrinsic) tangential $\overline{\partial}_{M}$-complex on a CR-submanifold $M$ (\cite{Boggess91, ChenShaw01}). A CR-submanifold is of hypersurface type if the complex tangent bundle has codimension one inside the real tangent bundle. In this paper, we study compactness of the $\overline{\partial}_{M}$-complex on compact pseudoconvex CR-submanifolds of hypersurface type. These submanifolds are (possibly nonorientable) generalizations of boundaries of pseudoconvex domains to higher codimension.\footnote{When they are orientable, they are boundaries of complex varieties, but in general only in the sense of currents, see \cite{HarveyLawson75}.} Background material on CR-manifolds may be found in the books \cite{BER, Boggess91, ChenShaw01, Zampieri08}. 

While $\overline{\partial}_{M}$ has been known to have closed range in $\mathcal{L}^{2}$ for a long time when $M$ is the boundary of a smooth bounded pseudoconvex domain (\cite{Shaw85, Kohn86, BoasShaw86}), this property has been established only a few years ago in \cite{Nicoara06} for compact pseudoconvex orientable CR-submanifolds of hypersurface type of dimension at least five (see \cite{HarringtonRaich10} for a recent generalization that replaces pseudoconvexity by `weak $Y(q)$', a geometric condition that takes into account the form level). Compactness estimates for $\overline{\partial}_{M}$ when $M$ satisfies property($P$) where obtained in \cite{RaichStraube08} when $M$ is the boundary of a smooth bounded pseudoconvex domain and in \cite{Raich10, Straube10} when $M$ is a smooth compact pseudoconvex CR-submanifold, also with the restriction that the (real) dimension of $M$ be at least five. (The results in these references are actually more precise and in particular also involve the form level.)  
Here we derive compactness estimates for $\overline{\partial}_{M}$ under the assumption that $M$ satisfies the geometric sufficient conditions for compactness in the $\overline{\partial}$-Neumann problem from \cite{Straube04, MunasingheStraube07}. We refer the reader to the introductions in these two papers for a detailed discussion of the relevance of this condition \emph{vis-\`{a}-vis} the potential theoretic condition property($P$). When the dimension of $M$ equals three, we need to assume that $\overline{\partial}_{M}$ has closed range in $\mathcal{L}^{2}$ (whereas in the higher dimensional case, this property is a consequence of the estimates we prove). Alternatively, in this case, if orientability is added to the geometric sufficient conditions mentioned above, the closed range property can be deduced from these combined assumptions (see Lemma \ref{orientable}). 

We keep the (standard) notation from \cite{Straube10} (except that we switch from $\overline{\partial}_{b}$ to the more appropriate $\overline{\partial}_{M}$). Denote by $(m-1)$ the dimension, over $\mathbb{C}$, of $T^{1,0}(M)$, that is, the CR-dimension of $M$ (the choice $(m-1)$ rather than $m$ will be convenient later). Locally, we can choose a smooth purely imaginary vector field $T \in T(M)$ of unit length that is orthogonal to the complex tangent bundle $T^{\mathbb{C}}(M)$. The Levi form at $z \in M$ is the Hermitian form $L_{z}$ defined via
\begin{equation}\label{leviform}
[X, \overline{Y}] = L_{z}(X, \overline{Y})T \;\;\text{mod}\;\;T^{1,0}(M)\oplus T^{0,1}(M), \;\; X, Y \in T^{1,0}(M) \;.
\end{equation}
$M$ is pseudoconvex when every point has a neighborhood where the Levi form is (positive or negative) semidefinite; it is strictly pseudoconvex when semidefiniteness can be replaced by strict definiteness. Replacing $T$ by $-T$ if necessary, we may assume that we are in the positive case. Note that when $M$ is orientable, $T$ can be chosen to be defined globally. 

The inner product on $(0,q)$-forms in $\mathbb{C}^{n}$ induces a pointwise inner product on $(0,q)$ forms on $M$. Integrating this pointwise inner product against the induced Lebesgue measure on $M$ provides an $\mathcal{L}^{2}$ inner product on $M$:
\begin{equation}\label{innerproduct}
(u, v) = \int_{M} (u, v)_{z} d\mu_{M}(z) \;. 
\end{equation}
For $0 \leq q \leq (m-1)$ we denote by $\mathcal{L}^{2}_{(0,q)}(M)$ the completion, under the norm corresponding to \eqref{innerproduct}, of the space of smooth $(0,q)$-forms on $M$. Locally, choose smooth sections $L_{1}, \cdots, L_{m-1}$ of $T^{1,0}(M)$ that are orthonormal (and hence form a basis for $T^{1,0}(M)$) and denote by $\omega_{1}, \cdots, \omega_{m-1}$ the dual $(0,1)$-forms. Then, locally, any from $u \in \mathcal{L}^{2}_{(0,q)}(M)$ can be written in the form $u=\sum^{\prime}_{|J|=q}u_{J}\overline{\omega}^{J}$, where the prime indicates summation over strictly increasing $q$-tuples $J=(j_{1}, j_{2}, \cdots, j_{q})$ only, $\overline{\omega}^{J} = \overline{\omega_{j_{1}}}\wedge \cdots \wedge \overline{\omega_{j_{q}}}$, and the coefficients $u_{J}$ are locally square integrable.

In a local frame, as above, $\overline{\partial}_{M}$ and $\overline{\partial}_{M}^{*}$ are given as follows. If $u = \sum^{\prime}_{|J|=q}u_{J}\overline{\omega}^{J}$, then
\begin{equation}\label{d-bar-M}
\overline{\partial}_{M}u = \sideset{}{'}\sum_{|J|=q}\sum_{j=1}^{m-1}\overline{L_{j}}(u_{J})\overline{\omega_{j}}\wedge \overline{\omega}^{J} +  \text{terms of order zero} \; ,
\end{equation}
and
\begin{equation}\label{d-bar-M-*}
\overline{\partial}_{M}^{*}u = -\sum_{j=1}^{m-1}\sideset{}{'}\sum_{|K|=q-1} L_{j}u_{jK}\overline{\omega}^{K} + \text{terms of order zero} \; .
\end{equation}
Note that in the second sum in \eqref{d-bar-M}, the coefficients of $u$ are not differentiated (the terms are of order zero). The complex Laplacian $\Box_{M,q}$ is defined as  $\overline{\partial}_{M}\overline{\partial}_{M}^{*} + \overline{\partial}_{M}^{*}\overline{\partial}_{M}$, with domain consisting of those forms in $\mathcal{L}^{2}_{(0,q)}(M)$ where the compositions are defined. Alternatively, $\Box_{M,q}$ may be defined as the unique self-adjoint operator associated with the closed quadratic form $Q_{M,q}(u,u) = (\overline{\partial}_{M}u, \overline{\partial}_{M}u) + (\overline{\partial}_{M}^{*}u, \overline{\partial}_{M}^{*}u)$, with form domain equal to $dom(\overline{\partial}_{M}) \cap dom(\overline{\partial}_{M}^{*})$ (compare \cite{Davies95}, Theorem 4.4.2, \cite{ReedSimon80}, Theorem VIII.15). The complex Green operator $G_{M,q}$ is the inverse of the restriction of $\Box_{M,q}$ to the orthogonal complement of its kernel (which equals $\ker(\overline{\partial}_{q,M}) \cap \ker(\overline{\partial}_{q-1,M}^{*})$). It is bounded with respect to the $\mathcal{L}^{2}_{(0,q)}(M)$-norm precisely when the ranges of both $\overline{\partial}_{M,q}$ and $\overline{\partial}_{M,q-1}^{*}$ are closed. Indeed, the latter property is equivalent to the embedding $j: \text{dom}(\overline{\partial}_{M,q}) \cap \text{dom}(\overline{\partial}_{M,q-1}^{*}) \cap \ker(\Box_{M,q})^{\perp} \rightarrow \ker(\Box_{M,q})^{\perp}$ being continuous (\cite{Hormander65}, Theorem 1.1.2). But if $j$ is continuous, then so is the composition $j\circ j^{*}: \ker(\Box_{M,q})^{\perp} \rightarrow \ker(\Box_{M,q})^{\perp}$; however, this composition is $G_{M,q}$ (compare \cite{Straube10}, Theorem 2.9, where the corresponding fact for the $\overline{\partial}$-Neumann operator is shown). Conversely, if $G_{M,q}$ is bounded, it follows that $j$ is (same reference). We can now extend $G_{M,q}$ to all of $\mathcal{L}^{2}_{(0,q)}(M)$ by setting it zero on the kernel of $\Box_{M,q}$.

The remainder of the paper is organized as follows. In section \ref{CR>2}, we discuss our results for CR-subanifolds of hypersurface type of CR-dimension at least two (equivalently: of real dimension at least five). As usual, the case of CR-dimension one requires special attention, and we discuss this case in section \ref{CR=1}. Sections \ref{proofs} and \ref{proofs2} contain the proofs of the main results, Theorems \ref{main} and \ref{main2}, respectively.

\section{Results for CR-dimension at least two}\label{CR>2}
Let now $M$ be a smooth compact pseudoconvex CR-submanifold of $\mathbb{C}^{n}$ of hypersurface type. If $Z$ is a real vector field in $T(M)$ defined in some open subset of $M$, we denote by $\mathcal{F}_{Z}^{t}$ the flow generated by $Z$. For a set of real vector fields $T_{1}, \cdots, T_{s}$, we define $span_{\mathbb{R}}(T_{1}, \cdots, T_{s})$ to be the set of all linear combinations whose coefficients are smooth real-valued functions. For $z \in M$, $\lambda_{0}$ denotes the smallest eigenvalue of the Levi form at $z$. 

We can now state our results for CR-submanifolds of CR-dimension at least two (equivalently: of real dimension at least five). The sufficient conditions for compactness estimates for $\overline{\partial}_{M}$ are given in terms of certain complex tangential flows. For every weakly pseudoconvex point $P \in M$, there should exist an arbitrarily short complex tangential flow that takes $P$, hence a neighborhood of $P$, into the set of strictly pseudoconvex points of $M$. The size of this neighborhood needs to be controlled form below in terms of the length of the flow. Moreover, the directions of the flow must stay within the span of a set of directions whose Levi form is bounded (for some fixed constant) by a multiple of the smallest Levi eigenvalue. It will be seen in Remark 2 below that these conditions are in a sense `minimal'.
\begin{theorem}\label{main}
Let $M$ be a smooth compact pseudoconvex CR-submanifold of $\mathbb{C}^{n}$ of hypersurface type of CR-dimension $(m-1) \geq 2$. Denote by $K$ the (compact) set of weakly pseudoconvex points of $M$. Assume that there exist smooth (complex) tangential vector fields $X_{1}, \cdots, X_{s}$ of type $(1,0)$, defined on $M$ in a neighborhood of $K$, so that $L_{z}(X_{j},\overline{X_{j}}) \leq C\lambda_{0}(z)$ for some constant $C$, a sequence $\{a_{j}\}_{j=1}^{\infty}$ with $\lim_{j\rightarrow \infty}a_{j} = 0$, and constants $C_{1}, C_{2} >0$, and $C_{3}$ with $1 \leq C_{3}<1+2/(2m-1)$, so that the following holds. For every $j \in \mathbb{N}$ and $P \in K$ there is a real vector field $Z_{P,j} \in span_{\mathbb{R}}(ReX_{1}, ImX_{1}, \cdots, ReX_{s}, ImX_{s})$ of unit length, defined in some neighborhood of $P$ on $M$ with $max|div Z_{P,j}| \leq C_{1}$, such that $\mathcal{F}_{Z_{P,j}}^{a_{j}}\left (B(P,C_{2}(a_{j})^{C_{3}}) \cap K\right ) \subseteq M \setminus K$. Let $0 \leq q \leq (m-1)$. Then the following compactness estimate holds: for all $\varepsilon >0$, there is a constant $C_{\varepsilon}$ such that
\begin{equation}\label{compest}
\|u\|_{\mathcal{L}^{2}_{(0,q)}(M)} \leq \varepsilon \left (\|\overline{\partial}_{M}u\|_{\mathcal{L}^{2}_{(0,q+1)}(M)} + \|\overline{\partial}_{M}^{*}u\|_{\mathcal{L}^{2}_{(0,q-1)}(M)} \right ) + C_{\varepsilon}\|u\|_{W^{-1}_{(0,q)}(M)}
\end{equation}
for all $u \in dom(\overline{\partial}_{M}) \cap dom(\overline{\partial}_{M}^{*})$; if $q=0$ or $q=(m-1)$, we also require $u \perp \ker(\overline{\partial}_{M})$ or $\ker(\overline{\partial}_{M}^{*})$, respectively (with the usual convention that then $\overline{\partial}_{M}^{*}u$ and $\overline{\partial}_{M}u$, respectively, are zero).
\end{theorem}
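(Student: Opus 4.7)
The proof follows the strategy used in \cite{Straube04, MunasingheStraube07} for the $\overline{\partial}$-Neumann operator, adapted to the complex Green operator on a CR-submanifold of hypersurface type. By the equivalences discussed in the introduction, the estimate \eqref{compest} reduces to showing that for every $\varepsilon > 0$ there is $C_\varepsilon$ with
\begin{equation*}
\|u\|^2 \leq \varepsilon\, Q_{M,q}(u,u) + C_\varepsilon \|u\|^2_{W^{-1}_{(0,q)}(M)}
\end{equation*}
for all $u$ in the form domain (and orthogonal to harmonics when $q = 0$ or $m-1$). A partition of unity $\{\chi, 1-\chi\}$ with $\chi \equiv 1$ in a neighborhood of $K$ splits the problem: on the support of $1-\chi$, $M$ is strictly pseudoconvex, so the Kohn--Morrey--H\"ormander identity yields a $1/2$-subelliptic estimate, hence the required bound via Rellich. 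It remains to estimate $\chi u$.

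For each $j$, cover $K$ by a bounded-multiplicity family of balls $B(P_k, \tfrac{1}{2} C_2 a_j^{C_3})$ with $P_k \in K$, and choose an associated partition of unity $\{\phi_k\}$ satisfying $|\nabla \phi_k| \lesssim a_j^{-C_3}$. Let $\mathcal{F}^t_k$ be the flow of $Z_k := Z_{P_k, j}$. The geometric hypothesis $\mathcal{F}^{a_j}_k\bigl(B(P_k, C_2 a_j^{C_3}) \cap K\bigr) \subseteq M \setminus K$, combined with the flow transport identity
\begin{equation*}
(\phi_k u)(x) - (\phi_k u)(\mathcal{F}^{a_j}_k(x)) = -\int_0^{a_j} Z_k(\phi_k u)(\mathcal{F}^t_k(x))\, dt,
\end{equation*}
Cauchy--Schwarz in $t$, and the uniform Jacobian bound from $\max|div\, Z_k| \leq C_1$, yields a bound of $\|\phi_k u\|^2$ by $Ca_j^2\|Z_k(\phi_k u)\|^2$ plus a contribution from $(\phi_k u)\circ\mathcal{F}^{a_j}_k$ whose support lies in $M\setminus K$ and is therefore absorbed by the first paragraph. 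Since $Z_k \in span_{\mathbb{R}}(Re\,X_\alpha, Im\,X_\alpha)$, $\|Z_k(\phi_k u)\|^2 \lesssim \sum_\alpha(\|X_\alpha(\phi_k u)\|^2 + \|\overline{X_\alpha}(\phi_k u)\|^2)$. The antiholomorphic parts are controlled by $Q_{M,q}$ via \eqref{d-bar-M}. The holomorphic parts are handled by integration by parts using the commutator $[X_\alpha, \overline{X_\alpha}] \equiv L(X_\alpha, \overline{X_\alpha})\, T \;\text{mod}\; T^{\mathbb{C}}(M)$: the Levi-form term $\int L(X_\alpha, \overline{X_\alpha}) |\phi_k u|^2 d\mu$ is dominated by $C\int \lambda_0 |\phi_k u|^2 d\mu$ by hypothesis, and this in turn by $Q_{M,q}(\phi_k u, \phi_k u)$ through the Kohn--Morrey--H\"ormander inequality on $M$ (valid for $(m-1)\geq 2$ under the stated endpoint orthogonality conditions).

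Summing over $k$ and absorbing cutoff commutator errors $\|[\overline{\partial}_M, \phi_k] u\|^2 + \|[\overline{\partial}_M^*, \phi_k] u\|^2 \lesssim a_j^{-2C_3}\|u\|^2_{\mathrm{supp}(\nabla\phi_k)}$ produces an aggregate bound whose leading term is $\lesssim a_j^2\, Q_{M,q}(u,u)$, together with remainder terms whose coefficients depend on the covering number ($\sim a_j^{-(2m-1)C_3}$) and the cutoff gradient ($\sim a_j^{-C_3}$). The hypothesis $C_3 < 1 + 2/(2m-1)$ is precisely the exponent constraint that lets these remainder terms be absorbed into a small multiple of $Q_{M,q}(u,u)$ plus a (large but $\varepsilon$-dependent) $\|u\|^2_{W^{-1}}$ error. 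Making this balance uniformly explicit, in particular tracking how covering number, cutoff gradients, and flow length combine, is the main technical obstacle. With the balance established, choosing $j$ large enough so that $a_j^2$ is smaller than $\varepsilon$ yields the desired estimate.
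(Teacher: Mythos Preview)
Your geometric/flow setup (covering $K$ by balls, flowing to strictly pseudoconvex points, bounding $\|\phi_k u\|^2$ by $a_j^2 \sum_\alpha(\|X_\alpha(\phi_k u)\|^2 + \|\overline{X_\alpha}(\phi_k u)\|^2)$ plus a strictly-pseudoconvex contribution, and summing with the $C_3$-constraint) matches the paper and is correct; this is how the paper arrives at its estimate \eqref{general}. The genuine gap is in your control of the holomorphic derivatives $\|X_\alpha v\|^2$.

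Integration by parts gives $\|X_\alpha v\|^2 = \|\overline{X_\alpha} v\|^2 + \bigl([X_\alpha,\overline{X_\alpha}]v,\,v\bigr) + \text{lower order}$, and the commutator contributes $\bigl(L(X_\alpha,\overline{X_\alpha})\,T v,\,v\bigr)$ modulo complex-tangential pieces --- a \emph{first-order} expression in the transversal direction $T$, not the zero-order term $\int L(X_\alpha,\overline{X_\alpha})\,|v|^2$ that you wrote. Likewise, on a compact CR manifold (which has no boundary) the basic identity \eqref{energy3} carries the term $Re\bigl(\sum c_{jk}\,T u_{jK},\,u_{kK}\bigr)$ rather than a boundary integral; there is no Kohn--Morrey--H\"ormander inequality of the form $\int \lambda_0 |u|^2 \lesssim Q_{M,q}(u,u)$ available here. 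When one combines these two $T$-terms (see \eqref{energyX}) one obtains a quadratic form in $Tu$ with matrix $(1+A)c_{jk} - \tfrac{1}{q}\bigl(\sum_l L(X_l,\overline{X_l})\bigr)\delta_{jk}$; the hypothesis $L(X_l,\overline{X_l}) \le C\lambda_0$ ensures only that the sum of any $q$ of its eigenvalues is nonnegative, not that the matrix itself is. To exploit this, the paper microlocalizes $u = \mathcal{P}^{+}u + \mathcal{P}^{0}u + \mathcal{P}^{-}u$, applies the sharp vector-valued G\aa rding inequality on the cone where $\sigma(T) \ge 0$ to handle $\mathcal{P}^{+}$ (\eqref{garding++}, \eqref{est+}), uses ellipticity on the $\mathcal{P}^{0}$-cone (\eqref{est0}), and --- the decisive step --- transfers the $\mathcal{P}^{-}$-estimate to the $\mathcal{P}^{+}$-estimate at the dual form level $(0,m-1-q)$ via the tilde operator $u\mapsto\tilde u$ (\eqref{inter2}, \eqref{est--}). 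This microlocal package, not a pointwise Levi-form bound, is what yields $\|X u\|^2 + \|\overline{X}u\|^2 \lesssim Q_{M,q}(u,u) + \|u\|^2$, and it is the main analytic content of the proof that your sketch is missing.
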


\emph{Remark 1}: The compactness estimate \eqref{compest} implies the `customary' Sobolev estimates. For details, see \cite{KohnNirenberg65}, (proof of) Theorem 3, \cite{Raich10}, subsection 5.3.

A geometrically simple corollary to theorem \ref{main} is obtained using a suitable cone condition on the set $K$ of weakly pseudoconvex points of $M$ (compare \cite{Straube04}, Corollary 2, \cite{MunasingheStraube07}, Corollary 1). We say that $M \setminus K$ satisfies a complex tangential cone condition, if there is a (possibly small) open cone $C$ in $\mathbb{R}^{2n} \approx \mathbb{C}^{n}$ so that the following holds. For each $P \in K$, there exists a (real) complex tangential direction so that when $C$ is moved by a rigid motion to have vertex at $P$ and axis in the given complex tangential direction, then $C \cap M$ is contained in $M \setminus K$. With vector fields $X_{1}, \cdots, X_{s}$ as in Theorem \ref{main}, we say that $M \setminus K$ satisfies a cone condition with axis in $span (Re X_{1}, Im X_{1}, \cdots, Re X_{s}, Im X_{s})$ when at each $P \in K$, the axis of the cone can be chosen in $span (Re X_{1}(P), Im X_{1}(P), \cdots, Re X_{s}(P), Im X_{s}(P))$.

\begin{corollary}\label{cone}
Let $M$, $K$, and the vector fields $X_{1}, \cdots, X_{s}$ be given as in Theorem \ref{main}. If $M \setminus K$ satisfies a cone condition with axis in $span_{\mathbb{R}} (Re X_{1}, Im X_{1}, \cdots, Re X_{s}, Im X_{s})$, then the conclusions of Theorem \ref{main} hold.
\end{corollary}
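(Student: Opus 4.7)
The plan is to verify the hypotheses of Theorem \ref{main} directly from the cone condition. Set $C_{3}=1$ (which lies in $[1, 1+2/(2m-1))$) and let $\{a_{j}\}_{j=1}^{\infty}$ be any positive sequence with $a_{j}\to 0$. For each $P\in K$, the cone condition supplies a unit vector $v_{P}\in span_{\mathbb{R}}(\operatorname{Re} X_{1}(P),\operatorname{Im} X_{1}(P),\ldots,\operatorname{Re} X_{s}(P),\operatorname{Im} X_{s}(P))$ such that the translate of the open cone $C$ with vertex $P$ and axis $v_{P}$ meets $M$ only inside $M\setminus K$. Writing $(Y_{1},\ldots,Y_{2s}):=(\operatorname{Re} X_{1},\operatorname{Im} X_{1},\ldots,\operatorname{Re} X_{s},\operatorname{Im} X_{s})$, I would express $v_{P}=\sum_{i}c_{i}^{P}Y_{i}(P)$ and take $Z_{P,j}:=Z_{P}:=V_{P}/|V_{P}|$ (independent of $j$), where $V_{P}:=\sum_{i}c_{i}^{P}Y_{i}$.

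The key technical step is to ensure uniformity in $P\in K$: the rank of $\{Y_{i}(P)\}_{i=1}^{2s}$ may drop, so a naive expansion of $v_{P}$ could give unbounded coefficients $c_{i}^{P}$, destroying the divergence and Lipschitz bounds needed below. I would address this by a compactness argument: cover $K$ by finitely many open sets on each of which a maximal linearly independent subcollection of $\{Y_{i}(z)\}$ forms a basis for the span, and select the coefficients $c_{i}^{P}$ via this local basis. This gives a uniform $L^{\infty}$-bound on the $c_{i}^{P}$, hence $|V_{P}(Q)|\geq 1/2$ on a neighborhood $U_{P}$ of $P$ of radius $\delta>0$ independent of $P$, and $Z_{P}$ smooth on $U_{P}$ with $|\operatorname{div} Z_{P}|\leq C_{1}$ and a uniform Lipschitz bound. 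By construction $Z_{P}$ is a unit vector field lying pointwise in the real span of the $Y_{i}$, as required.

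It remains to check the flow condition. A routine integration of $\dot{\gamma}=Z_{P}(\gamma)$, using $Z_{P}(P)=v_{P}$ and the uniform Lipschitz bound on $Z_{P}$, yields
\begin{equation*}
\mathcal{F}_{Z_{P}}^{a_{j}}(Q)-P \;=\; (Q-P)+a_{j}v_{P}+O\!\left(a_{j}|Q-P|+a_{j}^{2}\right),
\end{equation*}
uniformly in $P\in K$. For $Q\in B(P,C_{2}a_{j})\cap K$ the right-hand side becomes $a_{j}v_{P}+O((C_{2}+a_{j})a_{j})$, so the angle between $\mathcal{F}_{Z_{P}}^{a_{j}}(Q)-P$ and $v_{P}$ is $O(C_{2}+a_{j})$. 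Since the opening angle of $C$ is a fixed positive constant, taking $C_{2}$ small and discarding finitely many initial terms of $\{a_{j}\}$ forces $\mathcal{F}_{Z_{P}}^{a_{j}}(Q)$ into the translated cone at $P$, hence into $M\setminus K$. All hypotheses of Theorem \ref{main} hold, and the compactness estimate \eqref{compest} follows. The main obstacle, as noted, is the uniform-coefficient construction; once this is in place, the rest reduces to elementary ODE estimation and a simple geometric computation inside the cone.
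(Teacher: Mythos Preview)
Your approach---verify the hypotheses of Theorem~\ref{main} with $C_{3}=1$---is exactly the paper's, whose entire proof is the single sentence ``The cone condition implies that the assumptions in Theorem~\ref{main} are satisfied with $C_{3}=1$.'' You supply far more detail than the paper does; the constant-coefficient extension of the axis direction followed by an ODE estimate keeping the flowed point inside the translated cone is the natural way to flesh this out, and your flow computation is fine.

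One caution on the uniform-coefficient step, which you rightly flag as the crux. Your compactness argument (cover $K$ by finitely many open sets on each of which a fixed maximal linearly independent subcollection of the $Y_{i}$ forms a basis for the span) is not airtight as written: the rank of $\{Y_{i}(z)\}$ is only lower semicontinuous, so a subcollection that is maximal and spanning at a center $P_{0}$ may fail to span at nearby points where the rank has jumped up, and then $v_{P}$ for such $P$ need not lie in its span. If instead one takes the minimal-norm coefficient vector at each $P$ separately, that can blow up near rank-drop points (the Moore--Penrose pseudoinverse is discontinuous there), and with it the bound on $\operatorname{div} Z_{P}$. The paper itself is silent on this and implicitly defers to the analogous corollaries in \cite{Straube04, MunasingheStraube07}. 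In the situations the corollary is actually applied to (e.g.\ Example~4, where the real span of $\operatorname{Re}X,\operatorname{Im}X$ is a genuine rank-two subbundle near $K$), the rank is locally constant and your argument goes through without change.
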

\begin{proof}
The cone condition implies that the assumptions in Theorem \ref{main} are satisfied with $C_{3}=1$.
\end{proof}
We now discuss several classes of examples where Theorem \ref{main} applies. This discussion is analogous to the corresponding discussion in \cite{MunasingheStraube07}.

\emph{Example 1}: Assume that all the eigenvalues of the Levi form of $M$ are comparable.
The assumptions in the theorem are most transparent in this case. Any finite collection of complex tangential vector fields $X_{1}, \cdots, X_{s}$ will satisfy the condition $L_{z}(X_{j},\overline{X_{j}}) \leq \lambda_{0}(z)$. Taking a family that at each point $z$ in a neighborhood of $K$ spans $T^{1,0}(M)$ shows that this part of the assumption in Theorem \ref{main} and in Corollary \ref{cone} reduces to just `$Z_{P,j}$ complex tangential'. The class of pseudoconvex domains whose boundaries satisfy the comparable eigenvalues condition was studied in detail in \cite{Derridj78, Derridj91}; the same author later (see \cite{Derridj91}) studied hypersurfaces (of dimension at least five) whose Levi form satisfies this condition. In particular, $\mathcal{L}^{2}$-norms of derivatives of a form $u$ in complex tangential directions are controlled by the $\mathcal{L}^{2}$-norms of $\overline{\partial}_{b}u$, $\overline{\partial}_{b}^{*}u$, and $u$ (so called `maximal estimates' hold). 

\emph{Example 2}: When the CR-dimension of $M$ is one, there is only one eigenvalue of the Levi form. Accordingly, the direction of the vector fields $Z_{P,j}$ is not restricted as long as they are complex tangential. Theorem \ref{main} does not cover this situation, but we discuss the appropriate formulation in section \ref{CR=1}, see Theorem \ref{main2} and Corollary \ref{cone2}.

The condition of comparable eigenvalues turns out to be too stringent. It excludes in particular the two classes of examples discussed below. Once the comparable eigenvalues assumption is dropped, however, some control over the direction of the fields $Z_{P,j}$ relative to the direction of the smallest eigenvalue of the Levi form is needed for the conclusion of Theorem \ref{main} to hold. For example, let $M$ be the boundary of a bounded smooth convex domain in $\mathbb{C}^{n}$, with $n \geq 3$. Assume it is strictly convex except for an analytic disc in the boundary (i.e. in $M$). Then for points $P$ on this disc, the fields $Z_{P,j}$ can be chosen in complex tangential directions transverse to the disc in such a way that the flow conditions are satisfied. Yet the complex Green operator on $(0,1)$-forms fails to be compact in this situation, by Theorem 1.5 in \cite{RaichStraube08}. 

\emph{Example 3}: Assume there exists a vector field $X_{1} \in T^{1,0}(M)$, defined near $K$, such that $X_{1}(z)$ points in the direction of the smallest eigenvalue of the Levi form at $z$. Then the assumption in Theorem \ref{main} says that the fields $Z_{P,j}(z)$ should be contained in the span (over $\mathbb{R}$) of $ReX_{1}(z)$ and $ImX_{1}(z)$.

\emph{Example 4}: Assume that at each point of $M$, the Levi form has at most one degenerate eigenvalue (i.e. the eigenvalue zero is taken with multiplicity at most one); in particular, the eigenvalues are not comparable when there are weakly pseudoconvex points and the CR-dimension of $M$ is at least two.  It is shown in \cite{MunasingheStraube07}, Example 3, that in this case, the assumptions in Theorem \ref{main} are satisfied, as follows. For each point of $P \in K$, there exists a vector field $X(z)$ on $M$, nonvanishing near $P$, such that $L(X,\overline{X}) \leq C\lambda_{0}(z)$,  (in particular, $L(X(z_{0}),\overline{X(z_{0})})=0$). Since $K$ is compact, we can choose finitely many of these fields so that at each point in a neighborhood of $K$ the span of their real and imaginary parts is nontrivial. Note that the assumption in Corollary \ref{cone} concerning the cone condition simplifies to the following: $K$ should satisfy a cone condition with axis in the null space of the Levi form (which at each point of $K$ is a two real dimensional subspace of $T(M)$). 

\emph{Remark 2}: In order to illustrate the role of the conditions in Theorem \ref{main} and to show in what sense they are `minimal', we consider pseudoconvex domains whose boundaries fall into the class discussed in Example 4 (this discussion is parallel to \cite{MunasingheStraube07},Remark 6). So assume $\Omega$ is a smooth bounded pseudoconvex domain in $\mathbb{C}^{n}$, the Levi form of whose boundary $M$ has, at each point, at most one degenerate eigenvalue. We let $q=1$ and assume that \eqref{compest} holds. This means that the complex Green operator on $(0,1)$-forms on $b\Omega$ is compact; hence, so is the $\overline{\partial}$-Neumann operator on $(0,1)$-forms on $\Omega$ (\cite{RaichStraube08}, Theorem 1.1; see also \cite{Khanh10} for results on equivalence of estimates for $\overline{\partial}_{b}$ and $\overline{\partial}$ within a general framework). Consequently, $b\Omega$ contains no analytic discs (\cite{SahutogluStraube05}, Theorem 1). Let $P \in b\Omega$ be a weakly pseudoconvex point, and let $X(z)$ be the vector field in Example 4 above. Denote by $T^{\theta}$ the (real) vector field $T^{\theta} = \cos(\theta)ReX + \sin(\theta)ImX$, and set $S_{X,P} = \{\mathcal{F}_{T^{\theta}}^{t}(P)\,|\, 0\leq \theta \leq 2\pi, \,0 \leq t \leq t_{0}\}$, for $t_{0}$ small enough. Then $S_{X,P}$ is a smooth two real dimensional submanifold of $b\Omega$. Because $b\Omega$ contains no analytic discs, $S_{X,P}$ contains points $\zeta$ arbitrarily close to $P$ such that $L(X(\zeta),\overline{X(\zeta)}) > 0$ (\cite{SahutogluStraube05}, Lemma 3). Because $0 < L(X(\zeta),\overline{X(\zeta)}) \leq C\lambda_{0}(\zeta)$, $\zeta$ is a strictly pseudoconvex point. As a result, for $a_{j}>0$ small, there exists a vector field $Z_{P,j}$ of the form $Z_{P,j}=\cos(\theta)ReX + \sin(\theta)ImX$ (in particular, $Z_{P,j} \in span_{\mathbb{R}}(ReX,ImX)$), near $P$, such that $\mathcal{F}_{Z_{P,j}}^{a_{j}}(z) \notin K$ for $z$ close to $P$. These gives the balls $B(P, \cdot)$ as in Theorem \ref{main}, \emph{except} for the lower bound on the radius. The uniform boundedness condition on the divergence of the fields is also satisfied. 

\vspace{0.1in}
We conclude this section with a brief description of the main ideas in the proof of Theorem \ref{main}. The estimates start with pseudolocal estimates near strictly pseudoconvex points. A patch $U$ near a weakly pseudoconvex point $P$, on the other hand, can flow along a vector field $Z_{P,j}$ onto a strictly pseudconvex patch $\mathcal{F}_{Z_{P,j}}^{a_{j}}(U)$. If $j$ is big enough so that $a_{j} \leq \varepsilon$, the distance flowed is dominated by $\varepsilon$ (here, we use that the fields $Z_{P,j}$ have unit length). Thus the $\mathcal{L}^{2}$-norm of a form $u$ over $U$ is controlled by the norm of $u$ over $\mathcal{F}_{Z_{P,j}}^{a_{j}}(U)$ (which is under control), plus the norm (over $U$) of $u(z) - u(\mathcal{F}_{Z_{P,j}}^{a_{j}})$. Writing this difference as an integral of the derivative $Z_{P,j}u$ and applying the Cauchy-Schwarz inequality shows that this contribution is dominated by $O(\varepsilon)$ times the $\mathcal{L}^{2}$ norm of $Z_{p,j}u$. (Very) roughly speaking, the latter is controlled by the $\mathcal{L}^{2}$-norms of $\overline{\partial}_{M}u$, $\overline{\partial}_{M}^{*}u$, and $u$, and we obtain \eqref{compest} upon summing over suitable patches. This is literally true in the comparable eigenvalues case of Example 1; in the general case, this applies only to certain microlocal portions of $u$. However, because this holds for all from levels, the estimate for the missing portion can be recovered by passing from $(0,q)$-forms to the `dual' level of $(0,m-1-q)$-forms (when $dim(M) \geq 5$; when $dim(M)=3$, the condition that $u$ be orthogonal to $\ker(\overline{\partial}_{M})$ serves as a substitute). Still, two issues arise, and they are handled by the additional assumptions in Theorem \ref{main}. First, a change of variable arises between $U$ and $\mathcal{F}_{Z_{P,j}}^{t}(U)$ for $0 \leq t \leq a_{j}$ when computing the contribution from $u(z) - u(\mathcal{F}_{Z_{P,j}}^{a_{j}})$. To control the Jacobians of these diffeomorphisms, we require a uniform bound on $div Z_{P,j}$. Second, different patches in general will overlap after flowing along the respective fields $Z_{P,j}$, and control over this overlap is needed. This is achieved by the size of the patch $B(P,C_{2}(a_{j})^{C_{3}}) \cap K$ relative to $a_{j}$.

\section{Results for CR-dimension one}\label{CR=1}

When the CR-dimension of $M$ is one, we need the closed range property of $\overline{\partial}_{M}$ already in the proof of the estimates in Theorem \ref{main2}. As of this writing, it is open whether this property always holds in the three dimensional embedded case. It is known to hold when $M$ is the boundary of a smooth bounded pseudoconvex domain in $\mathbb{C}^{2}$ (\cite{BoasShaw86, Kohn86}). More generally (in our situation), when $M$ is assumed orientable, closed range of $\overline{\partial}_{M}$ is a consequence of our geometric assumptions on the set $K$ of weakly pseudoconvex points (see Lemma \ref{orientable} below). In general, however, we have to add this requirement to the assumption (compare \cite{KohnNicoara06}, where the same issue arises). On the other hand, in the case of CR-dimension one, matters are simplified in that there is no restriction on the vector fields $Z_{P,j}$ other than that they be complex tangential (the comparability of the Levi form to the smallest eigenvalue is automatic). 

\begin{theorem}\label{main2}
Let $M$ be a smooth compact pseudoconvex CR-submanifold of $\mathbb{C}^{n}$ of hypersurface type, of CR-dimension one, and assume that $\overline{\partial}_{M}: \mathcal{L}^{2}(M) \rightarrow \mathcal{L}^{2}_{(0,1)}(M)$ has closed range. Denote by $K$ the set of weakly pseudoconvex points of $M$. Assume there are a sequence $\{a_{j}\}_{j=1}^{\infty}$ with $\lim_{j\rightarrow \infty}a_{j} = 0$, and constants $C_{1}, C_{2} >0$, and $C_{3}$ with $1 \leq C_{3}<1+2/3$, so that the following holds. For every $j \in \mathbb{N}$ and $P \in K$ there is a real complex tangential vector field $Z_{P,j}$ of unit length, defined in some neighborhood of $P$ on $M$ with $max|div Z_{P,j}| \leq C_{1}$, such that $\mathcal{F}_{Z_{P,j}}^{a_{j}}\left (B(P,C_{2}(a_{j})^{C_{3}}) \cap K\right ) \subseteq M \setminus K$. Then the following compactness estimates hold: for all $\varepsilon >0$, there is a constant $C_{\varepsilon}$ such that
\begin{equation}\label{compest2}
\|u\|_{\mathcal{L}^{2}_{(0,0)}(M)} \leq \varepsilon  \|\overline{\partial}_{M}u\|_{\mathcal{L}^{2}_{(0,1)}(M)} + C_{\varepsilon}\|u\|_{W^{-1}_{(0,0)}(M)}
\; ,\; u \in dom(\overline{\partial}_{M}) \cap ker(\overline{\partial}_{M})^{\perp} \; ;
\end{equation}
and
\begin{equation}\label{compest3}
\|u\|_{\mathcal{L}^{2}_{(0,1)}(M)} \leq \varepsilon  \|\overline{\partial}_{M}^{*}u\|_{\mathcal{L}^{2}_{(0,0)}(M)}  + C_{\varepsilon}\|u\|_{W^{-1}_{(0,1)}(M)}
\; , \; u \in dom(\overline{\partial}_{M}^{*}) \cap ker(\overline{\partial}_{M}^{*})^{\perp} \; .
\end{equation}
\end{theorem}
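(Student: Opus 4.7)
The plan is to adapt the proof of Theorem \ref{main} to the CR-dimension one setting, establishing \eqref{compest2} and \eqref{compest3} in parallel, with the closed-range hypothesis taking the role that induction on the complementary form level plays in the higher-dimensional case. First, on patches compactly contained in $M \setminus K$, strict pseudoconvexity yields a subelliptic estimate of positive order which, combined with the compact Sobolev embedding, produces the local compactness estimate
\begin{equation*}
\|u\|^2 \leq \varepsilon\bigl(\|\overline{\partial}_M u\|^2 + \|\overline{\partial}_M^* u\|^2\bigr) + C_\varepsilon \|u\|_{-1}^2
\end{equation*}
directly. It remains to handle forms supported near the set $K$ of weakly pseudoconvex points.

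Near $K$, I apply the flow technique. Cover $K$ by balls $U_k = B(P_k, C_2 a_j^{C_3})$ with subordinate partition of unity $\{\chi_k\}$, and for each $k$ split
\begin{equation*}
|\chi_k u(z)|^2 \leq 2\bigl|\chi_k u(z) - (\chi_k u)\circ \mathcal{F}^{a_j}_{Z_k}(z)\bigr|^2 + 2\bigl|(\chi_k u)\circ \mathcal{F}^{a_j}_{Z_k}(z)\bigr|^2,
\end{equation*}
where $Z_k = Z_{P_k,j}$. Using the flow identity $u(z) - u(\mathcal{F}^{a_j}(z)) = -\int_0^{a_j}(Zu)(\mathcal{F}^t(z))\,dt$, Cauchy--Schwarz, and the Jacobian bound $|J| \leq \exp(C_1 a_j)$ coming from $|\operatorname{div} Z_{P,j}| \leq C_1$, the first term integrates to at most $C a_j^2 \|Z_k(\chi_k u)\|^2$; the second, after change of variables, is supported in the strictly pseudoconvex patch $\mathcal{F}^{a_j}_{Z_k}(U_k) \subseteq M \setminus K$, where the first-paragraph estimate applies. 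Since in CR-dimension one $Z_k \in \operatorname{span}_{\mathbb{R}}(\operatorname{Re} L_1, \operatorname{Im} L_1)$, the quantity $Z_k u$ is a combination of $L_1$ and $\overline{L}_1$ derivatives of the (single) scalar coefficient of $u$; the control of these derivatives is the crux of the proof and is addressed in the final paragraph below.

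Summing over $k$, the combinatorial input is that on the three-dimensional $M$ the covering multiplicity of the flowed patches $\mathcal{F}^{a_j}_{Z_k}(U_k)$ is of order $a_j^{-3(C_3-1)}$, so the overall difference contribution carries a factor $a_j^2 \cdot a_j^{-3(C_3-1)} = a_j^{5-3C_3}$, which tends to zero precisely because the hypothesis $C_3 < 1 + 2/3$ amounts to $5 - 3C_3 > 0$. Choosing $j$ large enough that $a_j^{5-3C_3} < \varepsilon$ and absorbing the residual $\|u\|^2$ terms on the left then yields the desired compactness estimates.

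The main obstacle, and the reason closed range must be assumed rather than derived, lies in the control of $\|Z_k u\|$. For $q=0$, $\overline{\partial}_M u$ controls only $\overline{L}_1 u$ and $L_1 u$ has no direct bound; for $q=1$ with $u = u_1\overline{\omega}_1$, $\overline{\partial}_M u$ vanishes identically and $\overline{\partial}_M^* u$ controls $L_1 u_1$ but not $\overline{L}_1 u_1$. A microlocal decomposition $\operatorname{Id} = \Psi^+ + \Psi^- + \Psi^0$ with respect to the sign of the dual variable to $T$ handles the elliptic piece and one of the two sign pieces via the Kohn--Morrey commutator identity $[L_1, \overline{L}_1] = \lambda_0 T + (\text{lower order})$, which gives control of the missing derivative by the available one up to lower order. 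On the remaining, ``wrong-sign'' piece that identity runs the wrong way; in higher CR-dimension one rebalances via the complementary form level $(0, m-1-q)$, but here the complementary level is the other end of the $\overline{\partial}_M$-complex itself. Only the closed-range hypothesis lets us swap: for $u \in \ker(\overline{\partial}_M^*)^\perp$, closed range produces $v \perp \ker(\overline{\partial}_M)$ with $u = \overline{\partial}_M v$ and $\|v\| \lesssim \|u\|$, and dually for $u \in \ker(\overline{\partial}_M)^\perp$. Threading this duality cleanly through the microlocal decomposition, so that the two estimates \eqref{compest2} and \eqref{compest3} emerge simultaneously without circularity, is the delicate step.
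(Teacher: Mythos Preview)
Your outline is essentially the paper's approach: flow argument plus microlocalization, with the closed-range hypothesis used to pass to the complementary form level on the ``wrong-sign'' piece. Two points where your sketch departs from the paper's execution are worth flagging.

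First, the paper does \emph{not} prove \eqref{compest2} and \eqref{compest3} in parallel, and there is no circularity to thread. It proves \eqref{compest2} outright, then observes that the two estimates are equivalent because the canonical solution operators for $\overline{\partial}_M$ and $\overline{\partial}_M^{*}$ are adjoints of each other; compactness of one gives compactness of the other. So the ``delicate step'' you worry about does not arise.

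Second, on the bad piece $\mathcal{P}^{+}u$ (for a function $u$), the paper does \emph{not} attempt to control the tangential derivatives $X\mathcal{P}^{+}u$, $\overline{X}\mathcal{P}^{+}u$. Instead it writes $u=\overline{\partial}_M^{*}\alpha$ with $\|\alpha\|\lesssim\|u\|$ and estimates $\|\mathcal{P}^{+}u\|^{2}$ directly by integration by parts:
\[
\|\mathcal{P}^{+}u\|^{2}=(\mathcal{P}^{+}\overline{\partial}_M^{*}\alpha,\mathcal{P}^{+}u)
=(\mathcal{P}^{+}\alpha,\mathcal{P}^{+}\overline{\partial}_M u)+\text{commutators},
\]
and then uses that for the $(0,1)$-form $\alpha$ the \emph{good} microlocal piece is $\mathcal{P}^{+}$, so that $\|\mathcal{P}^{+}\alpha\|$ (not its derivatives) already satisfies a compactness estimate $\|\mathcal{P}^{+}\alpha\|^{2}\le\varepsilon(\|\overline{\partial}_M^{*}\alpha\|^{2}+\|\alpha\|^{2})+C_\varepsilon\|\alpha\|_{-1}^{2}$. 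The commutator $[\mathcal{P}^{+},\overline{\partial}_M^{*}]\alpha$ requires a symbol analysis (it is order zero with symbol supported where $\chi^{+}$ is, so the same G\aa rding argument applies). This leaves a residual $C_\varepsilon\|\alpha\|_{-1}^{2}$, which is dispatched at the very end by noting that $u\mapsto\alpha$ is bounded $\mathcal{L}^{2}\to\mathcal{L}^{2}$ and hence compact $\mathcal{L}^{2}\to W^{-1}$, so $\|\alpha\|_{-1}^{2}\le\varepsilon'\|u\|^{2}+C_{\varepsilon'}\|u\|_{-1}^{2}$. Your sketch does not mention either the commutator analysis or this final absorption; both are needed.
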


We also have the analogue of Corollary \ref{cone} (with the analogous proof).

\begin{corollary}\label{cone2}
Let $M$ be a smooth compact pseudoconvex CR-submanifold of $\mathbb{C}^{n}$ of hypersurface type, of CR-dimension one, and assume that $\overline{\partial}_{M}: \mathcal{L}^{2}(M) \rightarrow \mathcal{L}^{2}_{(0,1)}(M)$ has closed range. Denote by $K$ the set of weakly pseudoconvex points of $M$. If $M \setminus K$ satisfies a complex tangential cone condition, then the conclusions of Theorem \ref{main2} hold.
\end{corollary}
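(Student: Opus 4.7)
The plan is to mirror the proof of Corollary \ref{cone}: deduce from the cone condition that the flow hypothesis of Theorem \ref{main2} is satisfied with $C_{3} = 1$, and then invoke Theorem \ref{main2} directly. Fix the open cone $C$ of aperture $\alpha > 0$ supplied by the cone condition. For each $P \in K$, pick a real complex tangential unit vector $v_{P}$ such that the cone $C$ moved by a rigid motion to have vertex $P$ and axis $v_{P}$ satisfies $C \cap M \subseteq M \setminus K$. Because the real complex tangent bundle has real rank two in the CR-dimension-one setting, $v_{P}$ admits a smooth extension to a real complex tangential unit vector field $Z_{P}$ on a neighborhood of $P$ in $M$ (choose a local real frame for this rank-two bundle near $P$, express $v_{P}$ in it, extend the coefficients constantly, and normalize). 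Define $Z_{P,j} := Z_{P}$, independent of $j$.

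Next I would establish a quantitative flow inclusion. In ambient Euclidean coordinates on $\mathbb{C}^{n}$, for $z$ near $P$ and $t$ small one has $\mathcal{F}_{Z_{P}}^{t}(z) - P = (z - P) + t\,Z_{P}(z) + O(t^{2})$, and $Z_{P}(z) = v_{P} + O(\|z - P\|)$ by smoothness of $Z_{P}$. Comparing the component along $v_{P}$ with the total length of this vector yields a constant $C_{2} > 0$, depending only on $\alpha$ (any $C_{2}$ smaller than $(1 - \cos\alpha)/(1 + \cos\alpha)$ works for $t$ sufficiently small), such that for all sufficiently small $t$ and every $z \in B(P, C_{2}t) \cap K$, the image $\mathcal{F}_{Z_{P}}^{t}(z)$ lies in the cone $C$ at $P$, hence in $M \setminus K$. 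Taking $t = a_{j}$ for $j$ large gives the flow inclusion required in Theorem \ref{main2} with $C_{3} = 1$.

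Finally, by compactness of $K$, cover $K$ by finitely many open sets on each of which one of the fields $Z_{P}$ is defined and smooth; the divergences are then uniformly bounded by some constant $C_{1}$, and the $O(t^{2})$ remainder above is uniform in $P \in K$ as well. All hypotheses of Theorem \ref{main2} are thereby verified, so the compactness estimates \eqref{compest2} and \eqref{compest3} follow. The only nontrivial point is the quantitative Taylor-expansion step producing a uniform $C_{2}$ (and the uniform error control, which is where compactness of $K$ intervenes); the closed-range assumption on $\overline{\partial}_{M}$ is already part of the hypotheses of the corollary and transfers verbatim to Theorem \ref{main2}.
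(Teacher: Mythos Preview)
Your proposal is correct and follows exactly the paper's approach: the paper's proof of Corollary~\ref{cone2} is the one-line remark that it is ``the analogue of Corollary~\ref{cone} (with the analogous proof)'', and Corollary~\ref{cone} in turn is proved by the single sentence that the cone condition implies the flow hypotheses with $C_{3}=1$. You have simply unpacked that sentence---constructing the fields $Z_{P}$, the Taylor estimate yielding a uniform $C_{2}$, and the compactness argument for the divergence bound---which the paper leaves implicit.
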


The closed range property of $\overline{\partial}_{M}$ can be verified when $M$ in Theorem \ref{main2} or Corollary \ref{cone2} is assumed orientable.

\begin{lemma}\label{orientable}
Let $M$ be a CR-submanifold of $\mathbb{C}^{n}$ of hypersurface type, of CR-dimension one, and make the assumptions in Theorem \ref{main2}, or in Corollary \ref{cone2}, with the exception of the closed range property of $\overline{\partial}_{M}$. Instead, assume that $M$ is orientable. Then still, $\overline{\partial}_{M}: \mathcal{L}^{2}(M) \rightarrow \mathcal{L}^{2}_{(0,1)}(M)$ must have closed range.
\end{lemma}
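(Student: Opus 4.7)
The plan is to verify closed range of $\overline{\partial}_{M}$ by establishing the basic $\mathcal{L}^{2}$ estimate
\[\|f\|_{\mathcal{L}^{2}(M)} \leq C\,\|\overline{\partial}_{M} f\|_{\mathcal{L}^{2}_{(0,1)}(M)}\]
for all $f \in \mathrm{dom}(\overline{\partial}_{M}) \cap \ker(\overline{\partial}_{M})^{\perp}$; closed range of $\overline{\partial}_{M}$ (equivalently of $\overline{\partial}_{M}^{*}$) then follows by a standard functional-analytic argument (see, e.g., the discussion of the Green operator in the Introduction).

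First I would exploit orientability: it provides a globally defined purely imaginary unit vector field $T$ on $M$ transverse to $T^{\mathbb{C}}(M)$, making the microlocal decomposition $f = \Psi^{+} f + \Psi^{0} f + \Psi^{-} f$ based on the sign of the symbol of $T$ (in the spirit of \cite{Nicoara06, RaichStraube08}) globally well-defined on $M$. In a local $(1,0)$-frame $L$, one has $\overline{\partial}_{M} f = \overline{L}(f)\,\overline{\omega}$ modulo zero-order terms, and $[L,\overline{L}] = \lambda T$ mod $T^{1,0}(M) \oplus T^{0,1}(M)$ with $\lambda \geq 0$ by pseudoconvexity.

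Next I would combine the standard positive-commutator calculation with the flow argument outlined in the Introduction. Pseudoconvexity gives a positive-commutator estimate on the $\Psi^{+}$ piece, controlling $\|\Psi^{+} f\|$ by $\|\overline{\partial}_{M} f\|$ plus lower-order terms; the $\Psi^{0}$ piece is controlled by elliptic regularity. The $\Psi^{-}$ piece, which is the main difficulty in CR-dimension one, is handled by passing to the conjugate in a local frame: $\overline{\Psi^{-} f}$ behaves like a $\Psi^{+}$ piece with respect to $L$ in place of $\overline{L}$, and the orthogonality $f \perp \ker(\overline{\partial}_{M})$ enables the analogous positive-commutator argument for $\overline{f}$---this is precisely where orientability substitutes for the dual-level trick used in CR-dimension $\geq 2$. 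In each microlocal piece, the contribution from the weakly pseudoconvex set $K$ is controlled by flowing balls $B(P, C_{2} a_{j}^{C_{3}}) \cap K$ into the strictly pseudoconvex region along $\mathcal{F}_{Z_{P,j}}^{a_{j}}$ and bounding the $\mathcal{L}^{2}$ norm on each ball by the norm on its flowed image plus a derivative term; the uniform divergence bound $|\mathrm{div}\,Z_{P,j}| \leq C_{1}$ controls the change-of-variable Jacobians, and the exponent restriction $C_{3} < 1+2/3$ controls overlap of the flowed patches, exactly as in the proof of Theorem \ref{main}. A partition of unity yields the compactness-style estimate
\[\|f\|_{\mathcal{L}^{2}(M)} \leq \varepsilon\,\|\overline{\partial}_{M} f\|_{\mathcal{L}^{2}_{(0,1)}(M)} + C_{\varepsilon}\,\|f\|_{W^{-1}(M)},\]
and a Rellich-based contradiction argument---if the basic estimate failed, a normalized sequence $\{f_{n}\} \subset \ker(\overline{\partial}_{M})^{\perp}$ with $\|f_{n}\|=1$ and $\overline{\partial}_{M} f_{n} \to 0$ would have a $W^{-1}$-convergent subsequence whose limit would lie in $\ker(\overline{\partial}_{M}) \cap \ker(\overline{\partial}_{M})^{\perp} = \{0\}$, contradicting $\|f_{n}\|=1$---absorbs the $W^{-1}$ term.

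The main obstacle is the $\Psi^{-}$ microlocal piece: in CR-dimension at least two, one passes to the dual form level $(0, m-1-q)$, an option unavailable in CR-dimension one. Orientability is essential because it provides the globally consistent transverse $T$, and hence a globally well-defined microlocalization, under which the orthogonality-to-kernel condition can substitute for the dual-level trick. Without orientability the sign of the symbol of $T$ is not globally defined and the whole scheme breaks down; with it, the argument above runs without needing closed range as an a priori input.
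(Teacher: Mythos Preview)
Your approach has a genuine circularity. You propose to derive the compactness estimate
\[
\|f\|_{\mathcal{L}^{2}(M)} \leq \varepsilon\,\|\overline{\partial}_{M} f\|_{\mathcal{L}^{2}_{(0,1)}(M)} + C_{\varepsilon}\,\|f\|_{W^{-1}(M)}
\]
for $f\perp\ker(\overline{\partial}_{M})$ and then bootstrap to closed range via Rellich. But look at how the paper actually proves this compactness estimate (Section~\ref{proofs2}): for functions the pieces $\mathcal{P}^{-}u$ and $\mathcal{P}^{0}u$ are controlled without any orthogonality assumption (estimate \eqref{est-0}), and the hard piece is $\mathcal{P}^{+}u$. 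To handle it, the paper writes $u=\overline{\partial}_{M}^{*}\alpha$ with $\|\alpha\|\lesssim\|u\|$---and this step \emph{uses} closed range as an a~priori input. The tilde/conjugation trick you invoke does not help here: the paper notes explicitly (parenthetical remark after \eqref{energy11}) that passing to $\tilde{u}$ converts the uncontrolled $\mathcal{P}^{+}$-piece of the function into the uncontrolled $\mathcal{P}^{-}$-piece of a $(0,1)$-form. Your sentence ``the orthogonality $f\perp\ker(\overline{\partial}_{M})$ enables the analogous positive-commutator argument for $\overline{f}$'' does not supply a mechanism; orthogonality to the kernel is a global $\mathcal{L}^{2}$ condition and does not by itself produce a favorable sign in a commutator. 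So as written, your argument assumes what it sets out to prove.

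The paper's route is entirely different and does not attempt to run the microlocal estimates without closed range. It argues: the flow hypotheses force every point of $M$ to be connected by arbitrarily short complex tangential curves to strictly pseudoconvex points, hence $M$ contains no analytic discs. Orientability then enters not to define a global $T$ for microlocalization, but through Harvey--Lawson \cite{HarveyLawson75}: an orientable compact pseudoconvex CR-submanifold of hypersurface type bounds an analytic variety, and (using Tr\'{e}preau's extension theorem and the absence of $(m-1)$-dimensional complex submanifolds) it bounds in the $C^{\infty}$ sense. Kohn's closed-range theorems for boundaries of varieties \cite{Kohn86} then give the conclusion. This is the content of Proposition~\ref{closed range}, to which the lemma reduces in two lines.
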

For emphasis, we formulate the following immediate corollary.
\begin{corollary}\label{orientable1}
Let $M$ be a CR-submanifold of $\mathbb{C}^{n}$ of hypersurface type, of CR-dimension one, and make the assumptions in Theorem \ref{main2}, or in Corollary \ref{cone2}, with the exception of the closed range property of $\overline{\partial}_{M}$. Instead, assume that $M$ is orientable. Then the conclusions of Theorem \ref{main2} still hold.
\end{corollary}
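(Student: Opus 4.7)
The plan is essentially to observe that Corollary \ref{orientable1} is a one-line composition of two results already available: Lemma \ref{orientable} supplies the closed range property that is the only hypothesis of Theorem \ref{main2} (or Corollary \ref{cone2}) not explicitly assumed, and Theorem \ref{main2} (or Corollary \ref{cone2}) supplies the desired compactness estimates once closed range is in hand. So the whole argument is a concatenation with no new analytic content, which is why the authors flag it as an ``immediate corollary.''

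Concretely, I would proceed as follows. First, I would verify that the hypotheses of Lemma \ref{orientable} are in force: $M$ is a smooth compact pseudoconvex CR-submanifold of $\mathbb{C}^n$ of hypersurface type of CR-dimension one, it is orientable, and the geometric conditions on $K$ from Theorem \ref{main2} (respectively, the cone condition of Corollary \ref{cone2}) hold. These are precisely what Corollary \ref{orientable1} assumes. Applying Lemma \ref{orientable} then yields that $\overline{\partial}_M: \mathcal{L}^{2}(M) \rightarrow \mathcal{L}^{2}_{(0,1)}(M)$ has closed range.

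With closed range now established, all of the hypotheses of Theorem \ref{main2} (respectively Corollary \ref{cone2}) are satisfied. Invoking that result therefore produces the compactness estimates \eqref{compest2} and \eqref{compest3}, which are exactly the conclusions being claimed in Corollary \ref{orientable1}.

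There is no genuine obstacle, since both of the invoked results are already in place by the time Corollary \ref{orientable1} is stated; the only thing to be careful about is the bookkeeping of which version of the geometric hypothesis is in force (the flow condition of Theorem \ref{main2} versus the cone condition of Corollary \ref{cone2}), but in either case the corresponding version of the conclusion is what Lemma \ref{orientable} and Theorem \ref{main2}/Corollary \ref{cone2} deliver in tandem.
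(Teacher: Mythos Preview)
Your proposal is correct and matches the paper's approach exactly: the authors explicitly call this an ``immediate corollary'' and do not write out a separate proof, since it follows at once by invoking Lemma \ref{orientable} to obtain closed range and then applying Theorem \ref{main2} (or Corollary \ref{cone2}).
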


\begin{proof}[Proof of Lemma \ref{orientable}]
The assumptions in Theorem \ref{main2}, hence in Corollary \ref{cone2}, imply in particular that for every point in $M$ there is an arbitrarily short complex tangential curve connecting it to a strictly pseudoconvex point. Therefore, $M$ cannot contain analytic discs, and the closed range property of $\overline{\partial}_{M}$ follows from Proposition \ref{closed range}.
\end{proof}

\begin{proposition}\label{closed range}
Let $M \subset \mathbb{C}^{n}$ be a smooth compact orientable pseudoconvex CR-submanifold of hypersurface type, of CR-dimension $(m-1)$. Assume $M$ contains no (germs of) analytic submanifolds of dimension $(m-1)$. Then $\overline{\partial}_{M}: \mathcal{L}^{2}_{(0,q-1)}(M) \rightarrow \mathcal{L}^{2}_{(0,q)}(M)$ has closed range, $1 \leq q \leq m$.
\end{proposition}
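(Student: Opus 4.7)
The plan is to split the argument by CR-dimension and reduce the proposition to known closed-range theorems for $\overline{\partial}_b$.

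When $(m-1) \geq 2$, the real dimension of $M$ is at least five, and I would simply invoke Nicoara's theorem \cite{Nicoara06}, which establishes $\mathcal{L}^2$-closed range of $\overline{\partial}_M$ at every form level on any compact, orientable, pseudoconvex CR-submanifold of hypersurface type of real dimension at least five. The no-analytic-submanifolds hypothesis plays no role in this range; it is only needed to compensate for Nicoara's dimensional restriction. This already settles the proposition for all $1 \leq q \leq m$ when $(m-1) \geq 2$.

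The substantive case is $(m-1)=1$, where $M$ is a compact, orientable, pseudoconvex CR three-manifold embedded in $\mathbb{C}^n$ and, by hypothesis, contains no $1$-dimensional analytic submanifolds (no analytic discs). My plan is to exhibit $M$ as the smooth boundary of a pseudoconvex domain inside a Stein complex surface, so that the closed range of $\overline{\partial}_M$ at $q=1$ reduces to the Kohn--Shaw theorem (\cite{Kohn86, Shaw85}) for $\overline{\partial}_b$ on the boundary of a pseudoconvex Stein domain. Orientability of $M$ together with the Harvey--Lawson bounding theorem \cite{HarveyLawson75} supplies a holomorphic $2$-chain $V \subset \mathbb{C}^n \setminus M$ with boundary current $M$. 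The absence of analytic discs in $M$, combined with pseudoconvexity, is exactly the ingredient that forces $V$ to be smooth up to and across $M$ and holomorphically convex globally; thus a one-sided collar of $M$ inside $V$ is a smooth pseudoconvex domain in a Stein surface with smooth boundary $M$, bringing us into the Kohn--Shaw setting. Closed range at $q=0$ is then the adjoint statement via standard functional analysis (\cite{Hormander65}, Theorem 1.1.1), while $q=m=2$ is vacuous since there are no $(0,2)$-forms on a CR-manifold of CR-dimension one.

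The main obstacle I expect is justifying that the Harvey--Lawson chain $V$ really is smooth up to $M$ and that its global geometry is Stein (or at least that interior singularities do not interfere with the $\mathcal{L}^2$ boundary theory on $M$). To handle this I would work with a smooth pseudoconvex collar neighborhood $\Omega$ of $M$ inside the smooth part of $V$; the standard Kohn--Morrey--Hörmander basic estimate applied on $\Omega$ localizes the closed range question to $M$ itself, so any isolated interior singularities of $V$ away from $M$ are irrelevant to the basic estimate, and the no-analytic-discs hypothesis enters precisely to rule out singular behavior of $V$ accumulating at $M$. Once this smoothness-up-to-the-boundary step is in place, the remainder is a direct application of the Kohn--Shaw machinery.
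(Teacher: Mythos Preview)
Your case split is sensible, and for $(m-1)\geq 2$ invoking Nicoara's theorem \cite{Nicoara06} is entirely correct and in fact simpler than what the paper does: the paper gives a single argument valid for all CR-dimensions via Harvey--Lawson and Kohn \cite{Kohn86}, never using Nicoara. So in that range you get the result with no use of the analytic-submanifold hypothesis, which is a genuine economy.

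For $(m-1)=1$ your overall architecture---Harvey--Lawson bounding variety plus Kohn's closed range theorems for $\overline{\partial}_b$ on the boundary of a variety---matches the paper. The gap is exactly at the point you flag as ``the main obstacle'': you assert that the no-analytic-disc hypothesis forces the Harvey--Lawson chain $V$ to be smooth up to $M$, but you do not say how, and your proposed collar argument is circular. To ``work with a smooth pseudoconvex collar neighborhood $\Omega$ of $M$ inside the smooth part of $V$'' you must already know that $V$ is smooth in a full one-sided neighborhood of $M$; Harvey--Lawson's general theorem only gives $V$ as a current, and boundary singularities are not excluded a priori. Nor is there any need to manufacture a Stein collar: Kohn's Theorems 5.2--5.3 in \cite{Kohn86} are stated precisely for $M$ bounding a variety in the $C^\infty$ sense with at most finitely many isolated interior singularities, so once $C^\infty$ bounding is established you are done.

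The paper supplies the missing mechanism. Locally, $M$ is a CR-graph over a pseudoconvex hypersurface $\pi(M)\subset\mathbb{C}^m$, the graphing functions $h_1,\dots,h_{n-m}$ being CR on $\pi(M)$. Since $M$ contains no $(m-1)$-dimensional analytic submanifolds, neither does $\pi(M)$, and Tr\'epreau's one-sided extension theorem \cite{Trepreau86} then extends each $h_j$ smoothly to the pseudoconvex side of $\pi(M)$. Feeding these local holomorphic extensions into Harvey--Lawson's section 10 (Theorem 10.4) upgrades the bounding chain to a subvariety bounded by $M$ in the $C^\infty$ sense, with only finitely many isolated interior singularities. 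That is the step your outline is missing; with it, your proof and the paper's coincide for $(m-1)=1$.
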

\begin{proof}
Because $M$ is orientable, \cite{HarveyLawson75}, Theorem I applies: $M$ bounds an analytic subvariety of $\mathbb{C}^{n}$ (note that `$M$ of hypersurface type' says that `$M$ is maximally complex', in the terminology of \cite{HarveyLawson75}). The conclusion of Proposition \ref{closed range} will follow from \cite{Kohn86}, Theorems 5.2, 5.3, if we can show that $M$ actually bounds a subvariety in the $C^{\infty}$ sense. That this is the case can be seen from Remark 5.1 in \cite{Kohn86} and an observation in \cite{Straube10} (see footnote 5 there). The argument is as follows.

Locally, near $P \in M$, $M$ is a graph over a smooth pseudoconvex hypersurface in $\mathbb{C}^{m}$ (\cite{BER, Boggess91}). We may suitably rotate coordinates in such a way that the graphing function $f$ is just the inverse of the projection $\pi: \mathbb{C}^{n} \rightarrow \mathbb{C}^{m}$, $(z_{1}, \cdots, z_{n}) \rightarrow (z_{1}, \cdots, z_{m})$, and $\pi(P) = 0$:
\begin{multline}\label{graph}
f(z_{1}, \cdots, z_{m}) = (z_{1}, \cdots, z_{m}, h_{1}(z_{1}, \cdots, z_{m}), \cdots, h_{n-m}(z_{1}, \cdots, z_{m})\; , \\
(z_{1}, \cdots, z_{m}) \in \pi(M) \; ,
\end{multline}
where each $h_{j}$ is a CR-function on $\pi(M)$, and the map $f$ given by \eqref{graph} is  CR-diffeomorphism (the inverse is $\pi|_{M}$). Because $M$ does not contain germs of $(m-1)$-dimensional complex manifolds, $\pi(M)$ does not either. Therefore, Tr\'{e}preau's extension theorem for CR-functions (\cite{Trepreau86}) applies: each of the $h_{j}$'s extends in a $C^{\infty}$ way to a one sided neighborhood on the pseudoconvex side of $\pi(M)$. (Note that because $\pi(M)$ contains no $(m-1)$-dimensional analytic manifolds, the set of points where all eigenvalues of the Levi form vanish is nowhere dense on $\pi(M)$, so that its pseudoconvex side is well defined.) Now the discussion in section 10 of \cite{HarveyLawson75} applies, and Theorem 10.4 there shows that $M$ does indeed bound a subvariety of $\mathbb{C}^{n}$ in the $C^{\infty}$ sense (this subvariety then necessarily has at most finitely many isolated singularities). 
\end{proof}

\section{Proof of Theorem \ref{main}}\label{proofs}

First, we consider the case where $1 \leq q \leq (m-2)$. It suffices to establish \eqref{compest} for $u \in C^{\infty}_{(0,q)}(M)$, as this space is dense in $dom(\overline{\partial}_{M}) \cap dom(\overline{\partial}_{M}^{*})$ in the graph norm $\|u\|_{graph} = \|\overline{\partial}_{M}u\| + \|\overline{\partial}_{M}^{*}u\| + \|u\|$ (this follows from Friedrichs' Lemma, see for example Appendix D in \cite{ChenShaw01}). The implementation of the strategy outlined in section \ref{CR>2} at first proceeds in exactly the same way as the proof of the main theorem in \cite{Straube04} (or Theorem 4.30 in \cite{Straube10a}), with the simplification that there is no need to extend the vector fields $Z_{P,j}$ from $M$. These arguments (pages 705--708 in \cite{Straube04}, bottom of page 120 to page 124 
\enlargethispage*{0.25in}
in \cite{Straube10a}) give the following estimate, which is analogous to (14) in \cite{Straube04} and (2) in \cite{MunasingheStraube07}.
\begin{multline}\label{general}
\int_{M}|u|^{2} d\mu_{M} \leq 2\varepsilon\left (\|\overline{\partial}_{M}u\|^{2} + \|\overline{\partial}_{M}^{*}u\|^{2}\right ) + C_{\varepsilon}\|u\|_{-1}^{2} \\
+ C(s,C_{2})\varepsilon^{2-(C_{3}-1)(2m-1)}\sum_{k=1}^{s}\int_{M}\left (|X_{k}u|^{2} + |\overline{X_{k}}u|^{2}\right ) d\mu_{M}  \; .
\end{multline}
Here, we have chosen $j$ big enough so that $a_{j} < \varepsilon$. $C(s,C_{3})$ denotes a constant that depends only on $s$ and on $C_{3}$. The exponent $2-(C_{3}-1)(2m-1)$ is strictly positive (because $C_{3} < 1+2/(2m-1)$); it depends on the dimension $(2m-1)$ of $M$. This dimension dependence arises from the comparison of volumes argument in \cite{Straube04} (bottom of page 707 to top of page 708, page 123 in \cite{Straube10a}). Note that the respective dimensions in \cite{Straube04} (or \cite{Straube10a}) and \cite{MunasingheStraube07} are $4$ and $2n$, giving exponents of $6-4C_{3}$ and $2n+2-2nC_{3}$, respectively. A reference for the subelliptic 1/2-estimates for $\overline{\partial}_{M}$ near a strictly pseudoconvex point is \cite{Kohn85}, Theorem 2.5. The theorem is stated globally, but the proof is local and gives the pseudolocal estimates needed. Alternatively, one can combine Theorems 8.3.5 and 8.2.5 from \cite{ChenShaw01}. In Theorem 8.3.5, $M$ is assumed oriented. But the argument is local as well, and orientability is not required for this part. 

We now begin the preparations for estimating the integrals in the last term in \eqref{general}. The first part amounts to a modification of the arguments in \cite{Derridj91b}, where all complex tangential derivatives are estimated assuming comparable eigenvalues of the Levi form. To prove the compactness estimate \eqref{compest}, it suffices to consider forms $u$ with small enough support so they can be expressed in a local frame as in section \ref{intro}. So let $u=\sideset{}{'}\sum_{|J|=q}u_{J}\overline{\omega}^{J}$. 
The usual argument involving integration by parts and careful `accounting' of terms (see for example the proof of Theorem 8.3.5 in \cite{ChenShaw01}) then gives
\begin{multline}\label{energy2}
\|\overline{\partial}_{M}u\|^{2} + \|\overline{\partial}_{M}^{*}u\|^{2} =  \sum_{j=1}^{m-1}\sideset{}{'}\sum_{|J|=q} \left \|\overline{L_{j}}u_{J} \right \|^{2} + \sideset{}{'}\sum_{|K|=q-1}\sum_{j,k=1}^{m-1} \left ( [L_{j},\overline{L_{k}}]u_{jK}, u_{kK}\right ) \\
+ O\left (\|u\|\left (\|Lu\| + \|\overline{L}u\|\right ) + \|u\|^{2} \right ) \; .
\end{multline}
Choosing $T$ as in section \ref{intro}, setting $c_{jk} = L(L_{j},\overline{L_{k}})$, $1 \leq j,k \leq (m-1)$, and taking real parts, this becomes
\begin{multline}\label{energy3}
\|\overline{\partial}_{M}u\|^{2} + \|\overline{\partial}_{M}^{*}u\|^{2} =  \sum_{j=1}^{m-1}\sideset{}{'}\sum_{|J|=q}\left \| \overline{L_{j}}u_{J} \right \|^{2} + Re \left (\sideset{}{'}\sum_{|K|=q-1}\sum_{j,k=1}^{m-1} \left ( c_{jk}Tu_{jK}, u_{kK}\right ) \right )\\
+ O\left (\|u\|\left (\|Lu\| + \|\overline{L}u\|\right ) + \|u\|^{2} \right ) \; ,
\end{multline}
where $Re(\cdot)$ denotes the real part of a complex number. Choose a constant $A$ such that $\sum_{|J|=q}^{'}\sum_{k=1}^{s}$ $\|\overline{X_{k}}u_{J}\|^{2} \leq A\|\overline{L}u\|^{2}$. Inserting this into $A$ times \eqref{energy3} gives
\begin{multline}\label{energy6}
A\left (\|\overline{\partial}_{M}u\|^{2} + \|\overline{\partial}_{M}^{*}u\|^{2}\right ) \geq \sum_{j=1}^{s}\sideset{}{'}\sum_{|J|=q}\left \| \overline{X_{j}}u_{J} \right \|^{2} + A\,Re \left (\sideset{}{'}\sum_{|K|=q-1}\sum_{j,k=1}^{m-1} \left ( c_{jk}Tu_{jK}, u_{kK}\right ) \right )\\
+ O\left (\|u\|\left (\|Lu\| + \|\overline{L}u\|\right ) + \|u\|^{2} \right ) \; .
\end{multline}
Next, we integrate by parts
\begin{multline}\label{byparts}
\|\overline{X_{j}}u_{J}\|^{2} = \int_{M}\overline{X_{j}}u_{J}\overline{\overline{X_{j}}u_{J}} = 
-\int_{M}u_{J}\overline{X_{j}\overline{X_{j}}u_{J}} + O(\|u\|\|\overline{L}u\|) \\
= -\int_{M}u_{k}\overline{\overline{X_{j}}X_{j}u_{J}} - \int_{M}u_{J}\overline{[X_{j},\overline{X_{j}}]u_{J}} + O(\|u\|\|\overline{L}u\|) \\
= \int_{M}X_{j}u_{J}\overline{X_{j}u_{J}} - \int_{M}u_{J}\overline{L(X_{j},\overline{X_{j}})Tu_{J}} + O\left (\|u\|\left (\|\overline{L}u\|+\|Lu\|\right )\right) \; .
\end{multline}
Taking real parts in \eqref{byparts} and inserting the resulting equation into \eqref{energy6} gives
\begin{multline}\label{energy5}
A\left (\|\overline{\partial}_{M}u\|^{2} + \|\overline{\partial}_{M}^{*}u\|^{2}\right ) \geq  
\sum_{j=1}^{m-1}\sideset{}{'}\sum_{|J|=q}\left \|X_{j}u_{J} \right \|^{2}  \\
+ Re \left (\sideset{}{'}\sum_{|K|=q-1}\sum_{j,k=1}^{m-1} \left (\left( A\,c_{jk} -  \frac{1}{q}\left (\sum_{l=1}^{s}L(X_{j},\overline{X_{j}})\right )\delta_{jk}\right )Tu_{jK}, u_{kK}\right ) \right )\\
+ O\left (\|u\|\left (\|Lu\| + \|\overline{L}u\|\right ) + \|u\|^{2} \right ) \; ,
\end{multline}
where $\delta_{jk}$ is the Kronecker symbol. The factor $1/q$ arises because every term $\left (Tu_{J},u_{J}\right )$ can be written in precisely $q$ ways as $\left (Tu_{jK},u_{jK}\right )$. 
Adding \eqref{energy5} to \eqref{energy3} implies
\begin{multline}\label{energyX}
(1+A)\left (\|\overline{\partial}_{M}u\|^{2} + \|\overline{\partial}_{M}^{*}u\|^{2}\right ) \geq 
\sum_{j=1}^{m-1}\sideset{}{'}\sum_{|J|=q}\left \|X_{j}u_{J} \right \|^{2} \,+ \,
\sum_{j=1}^{m-1}\sideset{}{'}\sum_{|J|=q}\left \| \overline{L_{j}}u_{J} \right \|^{2} \\
+ Re \left (\sideset{}{'}\sum_{|K|=q-1}\sum_{j,k=1}^{m-1} \left (\left( (1+A)c_{jk} -  \frac{1}{q}\left (\sum_{l=1}^{s}L(X_{l},\overline{X_{l}})\right )\delta_{jk}\right )Tu_{jK}, u_{kK}\right ) \right )\\
+ O\left (\|u\|\|\overline{L}u\| + \|u\|^{2} \right ) \; .
\end{multline}
We have used here that the $O(\|u\|\|Lu\|)$ terms on the right hand side can be estimated by $O(\|u\|\|\overline{L}u\|+\|u\|^{2})$ (via integration by parts).

In \eqref{energyX}, the error terms can be absorbed, at the cost of adding a constant times $\|u\|^{2}$ to the left hand side. In order to estimate the integrals in the last term in \eqref{general}, we need to control the $Re(\cdot)$-terms in \eqref{energyX} from below. This control will result from an application of the sharp form of G\aa rding's inequality for vector valued (form valued, in our case) functions. Here the assumption in Theorem \ref{main} that $L_{z}(X_{j},\overline{X_{j}}) \leq C\lambda_{0}(z)$, $z \in M$, $1 \leq j \leq s$, enters. Namely, consider the eigenvalues of the matrix $\left( (1+A)c_{jk} - \frac{1}{q}\left (\sum_{l=1}^{s}L(X_{l},\overline{X_{l}})\right )\delta_{jk}\right )$. At a point $z \in M$, the sum of $q$ eigenvalues is at least $(1+A)$ times the sum of the smallest $q$ eigenvalues of $(c_{jk})$ minus $sC\lambda_{0}(z)$, so is at least $(1+A)q\lambda_{0}(z) - sC\lambda_{0}(z) = \left ((1+A)q-sC\right )\lambda_{0}(z) \geq 0$ if $A$ is chosen sufficiently large (since $\lambda_{0}(z) \geq 0$, by pseudoconvexity of $M$). In turn, this implies that
$\sideset{}{'}\sum_{|K|=q-1}\sum_{j,k=1}^{m-1} \left( (1+A)c_{jk} -  \frac{1}{q}\left (\sum_{l=1}^{s}L(X_{l},\overline{X_{l}})\right )\delta_{jk}\right )$ $v_{jK}\overline{v_{kK}}$ is  positive semidefinite on $(0,q)$-forms (see for example \cite{Straube10a}, Lemma 4.7, for a proof of this fact from multilinear algebra). It is this latter property that is needed to apply G\aa rding's inequality, as follows.

Split $u$ microlocally as in \cite{Kohn85, Kohn02}. We can take the support of $u$ small enough so that in a neighborhood $U$ of it, we have coordinates on $M$ of the form $(x_{1}, x_{2}, \cdots, x_{2m-2}, t)$ such that $T = (-i)\partial/\partial t$. Choose $\chi \in C^{\infty}_{0}(U)$ with $\chi \equiv 1$ in a neighborhood of the support of $u$. Denote the `dual' coordinates in $\mathbb{R}^{2m-1}$ by $(\xi_{1}, \cdots, \xi_{2m-2}, \tau)=(\xi, \tau)$. On the unit sphere $\{|\xi|^{2}+\tau^{2}=1\}$, choose a smooth function $g$, $0\leq g \leq 1$, supported in $\{\tau > (1/2)|\xi|\}$, and identically equal to one on $\{\tau \geq (3/4)|\xi|\}$. For $|(\xi,\tau)| \geq 3/4$ set $\chi^{+}(\xi,\tau)=g\left (\left (\xi,\tau\right )/|(\xi,\tau)|\right )$, then take a smooth continuation into $\{|(\xi,\tau)| < 3/4\}$ that vanishes when $|(\xi,\tau)| \leq 1/2$. Define $\chi^{-}$ by $\chi^{-}(\xi, \tau) = \chi^{+}(-\xi, -\tau)$. Denoting the Fourier transform on $\mathbb{R}^{2m-1}$ by $\mathcal{F}$, we define
\begin{equation}\label{split}
\mathcal{P}^{+}u = \chi\mathcal{F}^{-1}\chi^{+}\hat{u}\;,
\; \mathcal{P}^{-}u = \chi\mathcal{F}^{-1}\chi^{-}\hat{u}\;, \; \text{and} \; \mathcal{P}^{0}u = \chi\mathcal{F}^{-1}(1-\chi^{+}-\chi^{-})\hat{u} \;;
\end{equation}
where $\hat{u}=\mathcal{F}u$, and the operators act coefficientwise. The $\mathcal{P}^{j}$, $j=-,0,+$, are pseudodifferential operators of order zero. 

We apply \eqref{energyX} to $\mathcal{P}^{+}$. The term inside $Re(\cdot)$ equals
\begin{multline}\label{garding+}
\sideset{}{'}\sum_{|K|=q-1}\sum_{j,k=1}^{m-1} \left (\left( (1+A)c_{jk} -  \frac{1}{q}\left (\sum_{l=1}^{s}L(X_{l},\overline{X_{l}})\right )\delta_{jk}\right )T\chi\mathcal{F}^{-1}\chi^{+}\widehat{u_{jK}}, \chi\mathcal{F}^{-1}\chi^{+}\widehat{u_{k,K}}\right ) \\
= \sideset{}{'}\sum_{|K|=q-1}\sum_{j,k=1}^{m-1} \left (\chi^{2}\left( (1+A)c_{jk} -  \frac{1}{q}\left (\sum_{l=1}^{s}L(X_{l},\overline{X_{l}})\right )\delta_{jk}\right )T\mathcal{F}^{-1}\chi^{+}\widehat{u_{jK}}, \mathcal{F}^{-1}\chi^{+}\widehat{u_{k,K}}\right ) \\
\;\;\;\;\;\;\;\;\;\;\;\;\;\;\;\;\;\;\;\;\;\;\;\;\;\;\;\;\;\;\;\;\;\;\;\;\;\;\;\;\;\;\;\;\;\;\;\;\;\;\;\;\;\; + O(\|\mathcal{F}^{-1}\chi^{+}\hat{u}\|^{2}) \\
= \sideset{}{'}\sum_{|K|=q-1}\sum_{j,k=1}^{m-1} \left (\chi^{2}\left( (1+A)c_{jk} -  \frac{1}{q}\left (\sum_{l=1}^{s}L(X_{l},\overline{X_{l}})\right )\delta_{jk}\right )\mathcal{F}^{-1}\tau^{+}\chi^{+}\widehat{u_{jK}}, \mathcal{F}^{-1}\chi^{+}\widehat{u_{k,K}}\right ) \\
+ O(\|u\|^{2}) \; , \;\;\;\;\;\;\;\;\;\;
\end{multline}
where $\tau^{+}$ denotes a smooth function that vanishes for $\tau\leq 0$ and agrees with $\tau$ for $\tau \geq 1/2$ ($\tau$ is the Fourier variable dual to $t$). We have used that $T$ corresponds to multiplication by $\tau$ on the transform side, and that on the support of $\chi^{+}$, $\tau = \tau^{+}$. The error term in \eqref{garding+} arises from commuting $T$ with multiplication by $\chi$ to obtain the first equality in \eqref{garding+}. The matrix $\chi^{2}\left( (1+A)c_{jk} -  \frac{1}{q}\left (\sum_{l=1}^{s}L(X_{l},\overline{X_{l}})\right )\delta_{jk}\right )_{jk=1}^{m-1}$ is compactly supported in $\mathbb{R}^{2m-1}$, and the sum of any $q$ of its eigenvalues is nonnegative (from the discussion in the paragraph before the last). Therefore, we can apply the sharp form of G\aa rding's inequality (\cite{LaxNirenberg66}, either Theorem 3.1 or Theorem 3.2; see also \cite{Kohn02}, Lemma 2.5) to the form $\mathcal{F}^{-1}\chi^{+}\hat{u}$ to obtain that there is a constant $A$ such that
\begin{multline}\label{garding++}
Re\left (\sideset{}{'}\sum_{|K|=q-1}\sum_{j,k=1}^{m-1} \left (\chi^{2}\left( (1+A)c_{jk} -  \frac{1}{q}\left (\sum_{l=1}^{s}L(X_{l},\overline{X_{l}})\right )\delta_{jk}\right )\mathcal{F}^{-1}\tau^{+}\chi^{+}\widehat{u_{jK}}, \mathcal{F}^{-1}\chi^{+}\widehat{u_{k,K}}\right )\right ) \\
\geq -A\|\mathcal{F}^{-1}\chi^{+}\hat{u}\|^{2} = -A\|\chi^{+}\hat{u}\|^{2} \geq -A\|u\|^{2} \; .
\end{multline}
(The stronger conclusion in \eqref{garding++} available from Theorem 3.1 with $\|u\|_{-1/2}^{2}$ in place of $\|u\|^{2}$ is not relevant for our subsequent estimates, as there is already an $O(\|u\|^{2})$ term in \eqref{garding+}.) Inserting \eqref{garding+} and \eqref{garding++} into \eqref{energyX} and taking into account our earlier remarks about absorbing the error terms there gives
\begin{multline}\label{est+}
\|X\mathcal{P}^{+}u\|^{2} + \|\overline{L}\mathcal{P}^{+}u\|^{2} \lesssim \|\overline{\partial}_{M}\mathcal{P}^{+}u\|^{2} + \|\overline{\partial}_{M}^{*}\mathcal{P}^{+}u\|^{2} + \|\mathcal{P}^{+}u\|^{2} \\
\lesssim \|\mathcal{P}^{+}\overline{\partial}_{M}u\|^{2} + \|\mathcal{P}^{+}\overline{\partial}_{M}^{*}u\|^{2} + \|u\|^{2} \lesssim \|\overline{\partial}_{M}u\|^{2} + \|\overline{\partial}_{M}^{*}u\|^{2} + \|u\|^{2} \; ,
\end{multline}
where $X\mathcal{P}^{+}u$ has the obvious meaning. We have used here that the commutators $[\,\overline{\partial}_{M}, \mathcal{P}^{+}]$ and $[\,\overline{\partial}_{M}^{*}, \mathcal{P}^{+}]$ are operators of order zero (only the calculus for the basic symbol classes denoted by $S^{m}$ in \cite{Stein93}, chapter VI, is needed here), and also the estimate $\|\mathcal{P}^{+}u\|^{2} \lesssim \|\mathcal{F}^{-1}\chi^{+}\hat{u}\|^{2} \lesssim \|u\|^{2}$. 

To obtain the corresponding estimate for $\mathcal{P}^{0}u$, we note that on the support of $(1-\chi^{+} - \chi^{-})$, $|\tau|$ is dominated by $\sum_{j=1}^{m-1}|\sigma(\overline{L_{j}})|$, where $\sigma$ denotes the symbol (provided the coordinate neighborhood $U$ is chosen small enough, compare for example \cite{Kohn85, Kohn86}). Therefore, the last term on the right hand side of \eqref{garding+} is $O(\|\overline{L}u\|\|u\|)$. Insertion into \eqref{energyX} gives (as in \eqref{est+})
\begin{equation}\label{est0}
\|X\mathcal{P}^{0}u\|^{2} + \|\overline{L}\mathcal{P}^{0}u\|^{2} \lesssim \|\overline{\partial}_{M}u\|^{2} + \|\overline{\partial}_{M}^{*}u\|^{2} + \|u\|^{2} \; .
\end{equation}
(More is true, but not needed here: the right hand side of \eqref{est0} controls $\|\mathcal{P}^{0}u\|_{1}^{2}$, that is, $\overline{\partial}_{M} \oplus \overline{\partial}_{M}^{*}$ is elliptic on cones that avoid the $\tau$-axis; see \cite{Kohn85}, estimate (2.9), \cite{Raich10}, Lemma 4.10, \cite{Nicoara06}, Lemma 4.18.)

It remains to estimate $X\mathcal{P}^{-}u$ and $\overline{X}\mathcal{P}^{-}u$. However, these estimates come for free form those for  $X\mathcal{P}^{+}u$ and $\overline{X}\mathcal{P}^{+}u$ already established at \emph{all} form levels. More precisely, $\mathcal{P}^{-}u_{J} = \overline{\mathcal{P}^{+}\tilde{u}_{\tilde{J}}}$, up to sign (see \eqref{fourier}, \eqref{inter2}  below), where $\tilde{u}$ is the $(m-1-q)$-form `dual' to $u$ (compare for example \cite{Kohn02} for this use of the tilde-operators). $\tilde{u}$ is defined as follows. For $u=\sum^{'}u_{J}\overline{\omega}^{J}$, set $\tilde{u}:=\sum^{'}\epsilon^{J\hat{J}}_{(1,\cdots,m-1)}\overline{u_{J}}\,\overline{\omega}^{\hat{J}}$. Here, $\epsilon^{J\hat{J}}_{(1,\cdots,m-1)}$ are the generalized Kronecker symbols. Then $\|u\| = \|\tilde{u}\|$, and a short computation shows that the tilde operator intertwines $\overline{\partial}_{M}$ and $\overline{\partial}_{M}^{*}$ modulo terms of order zero (see for example \cite{Kohn02}, p.226, \cite{Koenig04}, p.289):
\begin{equation}\label{intertwine}
\overline{\partial}_{M}\tilde{u} = (-1)^{q}\widetilde{(\overline{\partial}_{M}^{*}u)} + O(\|u\|) \; , \;\;\text{and} \;\;\;
\overline{\partial}_{M}^{*}\tilde{u} = (-1)^{q+1}\widetilde{(\overline{\partial}_{M}u)} + O(\|u\|) \; .
\end{equation}
In the following computation, we denote by $h^{*}(x)$ the function $h^{*}(x):=h(-x)$ (so that $\chi^{-}=(\chi^{+})^{*}$, in this notation). Then
\begin{multline}\label{fourier}
\;\;\;\;\;\epsilon^{J\hat{J}}_{(1,\cdots,m-1)}\mathcal{P}^{-}u_{J} = \epsilon^{J\hat{J}}_{(1,\cdots,m-1)}\chi\mathcal{F}^{-1}\chi^{-}\widehat{u_{J}} = \chi\mathcal{F}^{-1}\chi^{-}\widehat{\left (\overline{\tilde{u}_{\tilde{J}}}\right )} \\
= \chi\mathcal{F}^{-1}(\chi^{+})^{*}\overline{\left(\widehat{\tilde{u}_{\tilde{J}}}\right )^{*}} =
\chi\mathcal{F}^{-1}\overline{\left (\chi^{+}\widehat{\tilde{u}_{\tilde{J}}}\right )^{*}} =
-\chi\overline{\mathcal{F}^{-1}\chi^{+}\widehat{\tilde{u}_{\tilde{J}}}} \; .\;\;\;\;\;\;\;\;\;
\end{multline}
That is,
\begin{equation}\label{inter2}
\epsilon^{J\hat{J}}_{(1,\cdots,m-1)}\mathcal{P}^{-}u_{J} = -\overline{\mathcal{P}^{+}\tilde{u}_{\tilde{J}}} \; 
\end{equation}
(equivalently: $\widetilde{\mathcal{P}^{-}u}=-\mathcal{P}^{+}\tilde{u}$). Now we can estimate $X\mathcal{P}^{-}u $ and  $\overline{X}\mathcal{P}^{-}u $. Using \eqref{inter2}, we have
\begin{multline}\label{est--}
\;\;\;\|X\mathcal{P}^{-}u\|^{2} +  \|\overline{X}\mathcal{P}^{-}u\|^{2} = \|\overline{X}\mathcal{P}^{+}\tilde{u}\|^{2} + \|X\mathcal{P}^{+}\tilde{u}\|^{2} \\
\lesssim \|\overline{\partial}_{M}\tilde{u}\|^{2} + \|\overline{\partial}_{M}^{*}\tilde{u}\|^{2}
+ \|\tilde{u}\|^{2}
\lesssim \|\overline{\partial}_{M}u\|^{2} + \|\overline{\partial}_{M}^{*}u\|^{2} + \|u\|^{2} \; ; \;\;\;\;\;\;\;
\end{multline}
the first inequality comes from \eqref{est+} applied to $\tilde{u}$, the second results from\eqref{intertwine}.

\vspace{0.15in}
\emph{Remark 3}: In contrast to the comparable eigenvalues situation studied in \cite{Derridj91b}, our assumptions do not imply any domination of (the largest eigenvalue of) $(c_{jk})$ by $\sum_{j=1}^{s-1}L(X_{j},\overline{X_{j}})$. As a result, the estimates for $X\mathcal{P}^{-}u$ and $\overline{X}\mathcal{P}^{-}u$ could not have been derived as in \cite{Derridj91b}, proceeding from a version of \eqref{energyX} where the  quadratic form in the $Re(\cdot)$-part is negative semidefinite. 

\vspace{0.15in}
We now return to \eqref{general}. Because $\|Xu\|^{2} + \|\overline{X}u\|^{2} \lesssim \sum_{j \in \{-,0,+\}}\left (\|X\mathcal{P}^{j}u\|^{2} + \|\overline{X}\mathcal{P}^{j}u\|^{2} \right )$, we obtain form \eqref{general}, together with \eqref{est+}, \eqref{est0}, and \eqref{est--}
\begin{equation}\label{final}
\int_{M}|u|^{2}d\mu_{M} \lesssim \left (\varepsilon + \varepsilon^{2-(C_{3}-1)(2m-1)}\right )\left (\|\overline{\partial}_{M}u\|^{2} + \|\overline{\partial}_{M}^{*}u\|^{2} + \|u\|^{2} \right ) + C_{\varepsilon}\|u\|_{-1}^{2}\; .
\end{equation}
Now $2-(C_{3}-1)(2m-1)>0$ by assumption, so that for $\varepsilon$ small enough the contribution on the right hand side coming from $\|u\|^{2}$ can be absorbed. The resulting estimate, after rescaling $C_{\varepsilon}$ if necessary, is the required compactness estimate \eqref{compest} in Theorem \ref{main}. This concludes the proof of Theorem \ref{main} when $1 \leq q \leq (m-2)$.

It remains to consider the cases $q=0$ and $q=(m-1)$. However, in these cases the estimates follow form those for $q=1$ and $q=(m-2)$, respectively. If $u \in \ker(\overline{\partial}_{M})^{\perp} \subset\mathcal{L}^{2}(M)$, then $u=(\overline{\partial}_{M}^{*}G_{1})\overline{\partial}_{M}u$, where $G_{1}$ is the complex Green operator on $(0,1)$-forms. By the compactness estimate \eqref{compest} for $(0,1)$-forms already established, $\overline{\partial}_{M}^{*}G_{1}$ is compact (compare Proposition 4.2 in \cite{Straube10}, where the details are given in case of the $\overline{\partial}$-Neumann operator; the arguments for the complex Green operator are the same). This gives \eqref{compest} (with $\|\overline{\partial}_{M}^{*}u\|=0$; see for example \cite{Straube10}, Lemma 4.3, for the relevant functional analysis). The argument when $q=(m-1)$ is analogous.

\section{Proof of Theorem \ref{main2}}\label{proofs2}

In this section, we show what changes/additions need to be made in the proof of Theorem \ref{main} given in the previous section in order to prove Theorem \ref{main2}. We first consider \eqref{compest2}, for $u \in \ker(\overline{\partial}_{M})^{\perp}$. Assume for the moment also that $u \in C^{\infty}(M)$. For such $u$, there is a pseudolocal $1/2$-estimate near strictly pseudoconvex points for $u$, see \cite{Kohn85}, Theorem1.3. However, this part of the argument can be rephrased without using that $u \perp \ker(\overline{\partial}_{M})$; this will be convenient for later use. For a function $f$ supported near a strictly pseudoconvex point of $M$, there is the following $1/2$-estimate (\cite{ChenShaw01}, Theorem 8.2.5):
\begin{equation}\label{1/2}
\|f\|_{1/2}^{2} \leq C\left (\|Lf\|^{2} + \|\overline{L}f\|^{2} + \|f\|^{2}\right ) \; ,
\end{equation}
where $L$ spans (locally) $T^{1,0}(M)$. Let $X_{k}$, $1 \leq k \leq s$, be a collection of complex tangential vector fields on $M$, with the following property. For each point on $M$, at least one of the $X_{k}$ satisfies $|X_{k}|>1/2$ at the point. (If the complex line bundle $T^{1,0}(M)$ admits a section that does not vanish in a neighborhood of $K$, this can be achieved by a single field.) Using \eqref{1/2} to control $u$ near strictly pseudoconvex points, but otherwise arguing as in \eqref{general}, we have 
\begin{equation}\label{general2}
\int_{M}|u|^{2} d\mu_{M} \lesssim \left(\varepsilon + C(s,C_{2})\varepsilon^{5-3C_{3}}\right)\left(\left\|Xu\right\|^{2}+\left\|\overline{X}u\right\|^{2}\right)
+ C_{\varepsilon}\|u\|_{-1}^{2} \; ,
\end{equation}
or, after rescaling $C_{\varepsilon}$ (note that $(5-3C_{3})>0$)
\begin{equation}\label{general2a}
\int_{M}|u|^{2} d\mu_{M} \leq \varepsilon \left(\left\|Xu\right\|^{2}+\left\|\overline{X}u\right\|^{2}\right)
+ C_{\varepsilon}\|u\|_{-1}^{2} \; .
\end{equation}
Here, we have set $\|Xu\| = \sum_{k=1}^{s}\|X_{k}u\|$ and $\|\overline{X}u\| = \sum_{k=1}^{s}\|\overline{X_{k}}u\|$. In contrast to the proof of Theorem \ref{main} in the previous section, we are now not assuming that $u$ has small support (because of the condition $u \perp \ker(\overline{\partial}_{M})$), but the small support assumption was not used in deriving \eqref{general}.

The analogue of \eqref{energy6} is 
\begin{equation}\label{2energy4}
A\|\overline{\partial}_{M}u\|^{2} \geq \sum_{j=1}^{s}\|\overline{X_{j}}u\|^{2} \; ,
\end{equation}
for a big enough constant $A$, since in local coordinates, $\overline{\partial}_{M}u = (Lu)\overline{\omega}$. The integration by parts in \eqref{byparts} becomes
\begin{equation}\label{byparts2}
\|\overline{X_{j}}u\|^{2} = \|X_{j}u\|^{2} - \int_{M}u\overline{L(X_{j},\overline{X_{j}})Tu} + O\left (\|u\|\left (\|Xu\| + \|\overline{X}u\| \right ) + \|u\|^{2} \right ) \; .
\end{equation}
We have used that the complex tangential terms in the commutators $[X_{j},\overline{X_{j}}]$ are controlled by $\|Xu\| + \|\overline{X}u\|$. Inserting \eqref{byparts2} into \eqref{energy6}, adding the result to \eqref{energy6}, taking real parts and absorbing terms gives the analogue of \eqref{energyX}, namely
\begin{equation}\label{2energyX2}
C\left (\|\overline{\partial}_{M}u\|^{2} + \|u\|^{2}\right ) \geq \|Xu\|^{2} + \|\overline{X}u\|^{2} -Re\left (\left  (\sum_{j=1}^{s}L(X_{j},\overline{X_{j}})\right ) Tu\,,\,u\right ) \; ,
\end{equation}
where $C$ is a suitable constant. Note that $\sum_{j=1}^{s}L(X_{j},\overline{X_{j}}) \geq 0$.

The microlocalizations used in the previous sections are only defined locally. Accordingly, we cover $M$ by open sets $U_{j}$, $1 \leq j \leq R$, so that they are defined in each $U_{j}$. Denoting these local microlocalizations by $\mathcal{P}^{-}_{j}$, $\mathcal{P}^{0}_{j}$, and $\mathcal{P}^{+}_{j}$, we define 
the global versions by
\begin{equation}\label{globalmicro}
\mathcal{P}^{*}u = \sum_{j=1}^{R}\mathcal{P}^{*}_{j}\left (\phi_{j}u\right ) \;,\; * \in \{-,0,+\} \; ,
\end{equation}
where $\{\phi_{j}\}_{j=1}^{R}$ is a partition of unity subordinate to the cover $\{U_{j}\}_{j=1}^{R}$. Note that $\mathcal{P}^{*}_{j}\left (\phi_{j}u\right )$ is compactly supported in $U_{j}$, and so is well defined on all of $M$ (with the global smoothness properties being the same as the local ones).
Using G\aa rding's inequality in the same way as in section \ref{proofs} in deriving \eqref{est+}, we obtain from \eqref{2energyX2}, applied to $\mathcal{P}^{-}_{j}(\phi_{j}u)$, $1 \leq j \leq R$, the following estimate:
\begin{multline}\label{energy10}
\|X\mathcal{P}^{-}u\|^{2} + \|\overline{X}\mathcal{P}^{-}u\|^{2} \,\lesssim \, \sum_{j=1}^{R}\sum_{k=1}^{s}\left (\|X_{k}\mathcal{P}^{-}_{j}(\phi_{j}u)\|^{2} + \|\overline{X_{k}}\mathcal{P}^{-}_{j}(\phi_{j}u)\|^{2} \right ) \\
\lesssim \sum_{j=1}^{R}\left (\|\overline{\partial}_{M}(\phi_{j}u)\|^{2} + \|\phi_{j}u\|^{2} \right ) \,\lesssim \, \|\overline{\partial}_{M}u\|^{2} + \|u\|^{2} \; , \; u \in \mathcal{L}^{2}(M) \; ;
\end{multline}
we have used (as in \eqref{est+}) that $\|\overline{\partial}_{M}\mathcal{P}^{-}_{j}(\phi_{j}u)\|^{2} \lesssim \|\mathcal{P}^{-}_{j}\overline{\partial}_{M}(\phi_{j}u)\|^{2} + O(\|\phi_{j}u\|^{2}) \lesssim \|\overline{\partial}_{M}u\|^{2}+\|u\|^{2}$. Similarly, $\mathcal{P}^{0}$ is again benign, and we have (as in section \ref{proofs})
\begin{equation}\label{est02}
\|X\mathcal{P}^{0}u\|^{2} + \|\overline{X}\mathcal{P}^{0}u\|^{2} \lesssim \|\overline{\partial}_{M}u\|^{2} + \|u\|^{2} \; , \; u \in \mathcal{L}^{2}(M) \; .
\end{equation}
With this, $\mathcal{P}^{-}u$ and $\mathcal{P}^{0}u$ are essentially under control: inserting \eqref{energy10} and \eqref{est02} into \eqref{general2a} gives the estimate
\begin{equation}\label{est-0}
\|\mathcal{P}^{-}u\|^{2} + \|\mathcal{P}^{0}u\|^{2} \leq \varepsilon\left(\|\overline{\partial}_{M}u\|^{2} + \|u\|^{2}\right) + C_{\varepsilon}\|u\|_{-1}^{2}\; .
\end{equation}
We have not used, so far, that $u \perp \ker(\overline{\partial}_{M})$. Approximating $u \in \mathcal{L}^{2}(M)$ in the graph norm of $\overline{\partial}_{M}$ (via standard mollifiers and Friedrichs' Lemma, as in section \ref{proofs}) therefore shows that \eqref{est-0} holds for $u \in \text{dom}(\overline{\partial}_{M}) \subset \mathcal{L}^{2}(M)$.

Most of the additional work in the proof of Theorem \ref{main2} is required to estimate $\mathcal{P}^{+}u$. We now use that $u \in \ker(\overline{\partial}_{M})^{\perp}$. By assumption, the range of $\overline{\partial}_{M}$ is closed. Hence so is the range of $\overline{\partial}_{M}^{*}$, and consequently, this range equals $\ker(\overline{\partial}_{M})^{\perp}$. Thus $u \in \ker(\overline{\partial}_{M})^{\perp}$ implies $u = \overline{\partial}_{M}^{*}\alpha$, with $\alpha \perp \ker(\overline{\partial}_{M}^{*})$ and $\|\alpha\| \lesssim \|u\|$. In order to exploit this, we need the analogue of \eqref{energy10} for $(0,1)$-forms. The analogue of \eqref{2energy4} is 
\begin{equation}\label{energy*}
A\left (\|\overline{\partial}_{M}^{*}\alpha\|^{2} + \|\alpha\|^{2}\right ) \geq \sum_{k=1}^{s}\|X_{k}\alpha\|^{2}\; , 
\end{equation}
which follows from (in local coordinates) $\overline{\partial}^{*}(a\overline{\omega}) = La + ga$, where $g$ is a smooth function. We also assume briefly that $\alpha$ is smooth. As a result of \eqref{energy*}, we have for $(0,1)$-forms
\begin{equation}\label{energy*2}
C\left (\|\overline{\partial}_{M}^{*}\alpha\|^{2} + \|\alpha\|^{2}\right ) \geq \|X\alpha\|^{2} + \|\overline{X}\alpha\|^{2} + Re\left (\left  (\sum_{j=1}^{s}L(X_{j},\overline{X_{j}})\right ) T\alpha\,,\,\alpha\right ) \; ,
\end{equation}
instead of \eqref{2energyX2}. The global microlocalizations $\mathcal{P}^{*}\alpha$, $ * \in \{-,0,+\}$, are defined for $(0,1)$-forms in analogy to \eqref{globalmicro}. That is, if $\alpha = \sum_{j=1}^{R}\phi_{j}\alpha = \sum_{j=1}^{R}\phi_{j}(a_{j}\overline{\omega}_{j})$, we set $\mathcal{P}^{*}\alpha = \sum_{j=1}^{R}(\mathcal{P}^{*}_{j}\phi_{j}a_{j})\overline{\omega}_{j}$. The change in sign in front of the $Re$-term in \eqref{energy*2} (compared to \eqref{2energyX2}) has the effect that for $(0,1)$-forms, the part that is under control is the positively microlocalized part $\mathcal{P}^{+}\alpha$ (rather than also $\mathcal{P}^{-}\alpha$, as in \eqref{energy10}): 
\begin{equation}\label{energy11}
\|X\mathcal{P}^{+}\alpha\|^{2} + \|\overline{X}\mathcal{P}^{+}\alpha\|^{2} \lesssim \|\overline{\partial}_{M}^{*}\alpha\|^{2} + \|\alpha\|^{2} \; , \; \alpha \in \mathcal{L}^{2}_{(0,1)}(M) \; 
\end{equation}
(which is why the argument from the previous section, passing to $\alpha = \tilde{u}$, does not help here to control $\mathcal{P}^{+}u$ for a function $u$). 

For a $(0,1)$-from $\alpha$, the analogue of \eqref{general2a} is:
\begin{equation}\label{general4}
\|\alpha\|^{2}  \lesssim \varepsilon\left (\|X\alpha\|^{2}  + \|\overline{X}\alpha\|^{2}\right ) + C_{\varepsilon}\|\alpha\|_{-1}^{2} \; .
\end{equation}
Combining \eqref{general4} (for $\mathcal{P}^{+}\alpha$) with \eqref{energy11} gives
\begin{equation}\label{est1*}
\|\mathcal{P}^{+}\alpha\|^{2}  \lesssim \varepsilon\left(  \|\overline{\partial}_{M}^{*}\alpha\|^{2} + \|\alpha\|^{2}\right)  + C_{\varepsilon}\|\alpha\|_{-1}^{2} \; ; 
\end{equation}
note that the $\mathcal{P}^{+}_{j}$, hence $\mathcal{P}^{+}$, are of order zero, so that $\|\mathcal{P}^{+}\alpha\|_{-1} \lesssim \|\alpha\|_{-1}$. \eqref{est1*} then holds for $\alpha \in \text{dom}(\overline{\partial}_{M}^{*}) \subset \mathcal{L}^{2}_{(0,1)}(M)$; this is again easily checked via mollifiers and Friedrichs' Lemma.

We can now proceed with the estimate for $\|\mathcal{P}^{+}u\|^{2}$, with $u=\overline{\partial}_{M}^{*}\alpha$. The argument is the usual one for making $\overline{\partial}_{M}u$ appear. We have
\begin{multline}\label{est20}
\|\mathcal{P}^{+}u\|^{2} = \left (\mathcal{P}^{+}\overline{\partial}_{M}^{*}\alpha, \mathcal{P}^{+}u\right ) 
= \left (\overline{\partial}_{M}^{*}\mathcal{P}^{+}\alpha, \mathcal{P}^{+}u\right ) + \left ([\mathcal{P}^{+}, \overline{\partial}_{M}^{*}]\alpha, \mathcal{P}^{+}u\right ) \\
= \left (\mathcal{P}^{+}\alpha, \overline{\partial}_{M}\mathcal{P}^{+}u\right ) + \left ([\mathcal{P}^{+}, \overline{\partial}_{M}^{*}]\alpha, \mathcal{P}^{+}u\right ) \\  = \left(\mathcal{P}^{+}\alpha, \mathcal{P}^{+}\overline{\partial}_{M}u\right ) + \left(\mathcal{P}^{+}\alpha, [\overline{\partial}_{M},\mathcal{P}^{+}]u\right) + \left ([\mathcal{P}^{+}, \overline{\partial}_{M}^{*}]\alpha, \mathcal{P}^{+}u\right ) \\
\lesssim \|\mathcal{P}^{+}\alpha\|\left (\|\overline{\partial}_{M}u\| + \|u\|\right ) + \left |\left ([\mathcal{P}^{+}, \overline{\partial}_{M}^{*}]\alpha, \mathcal{P}^{+}u\right )\right | \; .
\end{multline}
We have used here that $\|\mathcal{P}^{+}\overline{\partial}_{M}u\| \lesssim \|\overline{\partial}_{M}u\|$, and that $\|[\overline{\partial}_{M},\mathcal{P}^{+}]u\| \lesssim \|u\|$ (i.e. the commutator acts as an operator of order zero). In view of $\overline{\partial}_{M}^{*}\alpha = u$ and $\|\alpha\| \lesssim \|u\|$, inserting the estimate \eqref{est1*} into the last term in \eqref{est20} gives
\begin{equation}\label{est21}
\|\mathcal{P}^{+}u\|^{2} \,\lesssim \left(\varepsilon \|u\|^{2} + C_{\varepsilon}\|\alpha\|_{-1}^{2}\right)^{1/2}\left (\|\overline{\partial}_{M}u\| + \|u\|\right ) 
 + \left |\left ([\mathcal{P}^{+}, \overline{\partial}_{M}^{*}]\alpha, \mathcal{P}^{+}u\right )\right|\;,
\end{equation}
or, after rescaling $C_{\varepsilon}$,
\begin{equation}\label{est22}
\|\mathcal{P}^{+}u\|^{2} \leq \varepsilon\left(\|\overline{\partial}_{M}u\|^{2} + \|u\|^{2}\right) + C_{\varepsilon}\|\alpha\|_{-1}^{2} + C\left |\left ([\mathcal{P}^{+}, \overline{\partial}_{M}^{*}]\alpha, \mathcal{P}^{+}u\right )\right| \; ,
\end{equation}
with $C$ independent of $\varepsilon$.

It remains to estimate the last term in \eqref{est22}. Roughly speaking, the contribition for $j$ fixed to this term is as good as a $\mathcal{P}^{+}_{j}$-term, because its symbol is supported on the support of $\chi^{+}$. More precisely, set $\alpha = a_{j}\overline{\omega}_{j}$, $1 \leq j \leq R$, where $\overline{\omega}_{j}$ is the `basis' $(0,1)$-form on $U_{j} \subseteq M$. Then the commutator we wish to estimate is
\begin{multline}\label{commutator}
\left[\mathcal{P}^{+}, \overline{\partial}_{M}^{*}\right]\alpha = \sum_{j=1}^{R}\left(\mathcal{P}^{+}_{j}\phi_{j}(-L_{j}+g_{j})\alpha - (-L_{j}+g_{j})\mathcal{P}^{+}_{j}(\phi_{j}a_{j})\right)\overline{\omega}_{j} \\
= -\sum_{j=1}^{R}\left[\mathcal{P}^{+}_{j}, L_{j}\right](\phi_{j}a_{j})\overline{\omega}_{j}
+ \sum_{j=1}^{R}\left[\mathcal{P}^{+}_{j}, g_{j}\right](\phi_{j}a_{j})\overline{\omega}_{j}
+ \sum_{j=1}^{R}\mathcal{P}^{+}_{j}\left((-L_{j}+g_{j})\phi_{j}\right)a_{j}\overline{\omega}_{j} \; ,
\end{multline}
where $L_{j}$ spans $T^{1,0}(M)$ on $U_{j}$, and $g_{j} \in C^{\infty}(U_{j})$. Each commutator $\left[\mathcal{P}^{+}_{j}, g_{j}\right]$ in the middle term on the last line in \eqref{commutator} is of order $(-1)$, and so this term is dominated by $\|\alpha\|_{-1}$. The last term in \eqref{commutator}
can be estimated by using a version of \eqref{est1*} for each $\mathcal{P}^{+}_{j}\left((-L_{j}+g_{j})\phi_{j}\right)a_{j}$. The result is:
\begin{equation}\label{est23}
\left\|\sum_{j=1}^{R}\mathcal{P}^{+}_{j}\left((-L_{j}+g_{j})\phi_{j}\right)a_{j}\overline{\omega}_{j}
\right\|
\lesssim\varepsilon\left(\|\overline{\partial}_{M}^{*}\alpha\| + \|\alpha\|\right) + C_{\varepsilon}\|\alpha\|_{-1} 
\lesssim \varepsilon\|u\| + C_{\varepsilon}\|\alpha\|_{-1} 
\end{equation}
(recall that $\|\alpha\| \lesssim \|u\|$).

To estimate the first term in the last line of \eqref{commutator}, we analyze the part of order zero of the symbol $\sigma([\mathcal{P}^{+}_{j}, L_{j}])$ (the part of first order vanishes). It will be convenient, in the following estimates concerning $\alpha$, to first assume that $\alpha$ is smooth, and then pass to $\alpha \in \text{dom}(\overline{\partial}_{M}^{*})$ via approximation. The symbol in question is a sum of three terms, all of which are of the form $h_{1}(x,t)h_{2}(\xi,\tau)$ (see for example \cite{Stein93}, Theorem 2 in section 3, chapter VI), 
with $|h_{1}(x,t)| \leq C\chi_{1}(x,t)$, where $\chi_{1}$ is a smooth nonnegative cutoff function in $C^{\infty}(U_{j})$ that equals one in a neighborhood of the support of $\chi$ ($\chi$ is the function used in the definition \eqref{split} of the (local) microlocalizations.) The contribution of such a term to $[\mathcal{P}^{+}_{j}, L_{j}]$ is thus dominated by $\|\chi_{1}(\mathcal{F}^{-1}(h_{2}\widehat{\phi_{j}a_{j}}))\overline{\omega}_{j}\|$. Therefore, we can use \eqref{general4} and \eqref{energy*2} together with G\aa rding's inequality again as in the derivation of \eqref{est+} in section \ref{proofs} to obtain that these contributions are bounded by $\varepsilon(\|\overline{\partial}_{M}^{*}\alpha\| + \|\alpha\|) + C_{\varepsilon}\|\alpha\|_{-1} \lesssim \varepsilon\|u\| + \|\alpha\|_{-1}$. 

Combining these estimates gives
\begin{equation}\label{est26}
\left\|\left[\mathcal{P}^{+}, \overline{\partial}_{M}^{*}\right]\alpha\right\| \leq \varepsilon\|u\| + C_{\varepsilon}\|\alpha\|_{-1} \; .
\end{equation}
Inserting this last estimate into \eqref{est22} yields
\begin{equation}\label{est27}
\|\mathcal{P}^{+}u\|^{2} \leq \varepsilon \left(\|\overline{\partial}_{M}u\|^{2} + \|u\|^{2}\right) + C_{\varepsilon}\|\alpha\|_{-1}^{2} \; .
\end{equation}

We can now estimate $\|u\|$. Adding \eqref{est-0} and \eqref{est27}, absorbing terms, and rescaling $C_{\varepsilon}$ gives
\begin{equation}\label{est28}
\|u\|^{2} \leq \varepsilon\|\overline{\partial}_{M}u\|^{2} + C_{\varepsilon}\left(\|u\|_{-1}^{2} + \|\alpha\|_{-1}^{2}\right) \; .
\end{equation}
Finally, because $u \rightarrow \alpha$ is compact as a map from $\mathcal{L}^{2}(M)$ to $W^{-1}_{(0,1)}(M)$ ($\|\alpha\| \lesssim \|u\|$ and $\mathcal{L}^{2}(M) \rightarrow W^{-1}(M)$ is compact), we have $\|\alpha\|_{-1}^{2} \leq \varepsilon^{\prime}\|u\|^{2} + C_{\varepsilon^{\prime}}\|u\|_{-1}^{2}$ (see again \cite{Straube10}, Lemma 4.3 for this fact from functional analysis). Inserting this into \eqref{est28}, choosing $\varepsilon^{\prime}$ small enough, and absorbing the (small) term $C_{\varepsilon}\varepsilon^{\prime}\|u\|^{2}$ then gives the required estimate \eqref{compest2} (again upon rescaling $C_{\varepsilon}$). This completes the proof of \eqref{compest2} in Theorem \ref{main2}.

To prove \eqref{compest3}, we observe that the two estimates in Theorem \ref{main2} are actually equivalent. \eqref{compest2} says that the canonical solution operator to $\overline{\partial}_{M}$ is compact as an operator from $Im(\overline{\partial}_{M})$ to $\ker(\overline{\partial}_{M}^{*})^{\perp}$ (\cite{Straube10}, Lemma 4.3). Similarly, \eqref{compest3} says that the canonical solution operator to $\overline{\partial}_{M}^{*}$ is compact as an operator from $Im(\overline{\partial}_{M}^{*}) =  \ker(\overline{\partial}_{M})^{\perp}$ to $\ker(\overline{\partial}_{M}^{*})^{\perp} = 
Im(\overline{\partial}_{M})$. But these two operators are adjoints of each other, so that one is compact if and only if the other is. This completes the proof of Theorem \ref{main2}.

\bigskip
\providecommand{\bysame}{\leavevmode\hbox to3em{\hrulefill}\thinspace}


\begin{thebibliography}{10}

\bibitem{BER}
M.~Salah Baouendi, Peter Ebenfelt, and Linda Preiss Rothschild, \emph{Real Submanifolds in Complex Space and Their Mappings}, Princeton University Press, Princeton, 1999.

\bibitem{BoasShaw86}
Boas, Harold P. and Shaw, Mei-Chi, Sobolev estimates for the Lewy operator on weakly pseudoconvex boundaries, \emph{Math. Ann.} \textbf{274}, no.2  (1986), 221--231.

\bibitem{Boggess91}
Boggess, Albert, \emph{CR-Manifolds and the Tangential Cauchy-Riemann Complex}, Studies in Advanced Mathematics, CRC Press 1991.

\bibitem{ChenShaw01}
Chen, So-Chin and Shaw, Mei-Chi, \emph{Partial Differential Equations in
Several Complex Variables}, Studies in Advanced Mathematics 19, Amer. Math. Soc./International Press, 2001.

\bibitem{Davies95}
Davies, E.~B., \emph{Spectral Theory and Differential Operators}, Cambridge Studies in Advanced Mathematics, nr. 42, Cambridge Univ. Press, 1995.

\bibitem{Derridj78}
Derridj, M., Regularit\'{e} pour $\overline{\partial}$ dans quelques domaines faiblement pseudoconvexes, \emph{J. Diff. Geometry} \textbf{13} (1978), 559--576.

\bibitem{Derridj91}
\bysame, Domaines a estimation maximale, \emph{Math. Z.} \textbf{208} (1991), 71--88.

\bibitem{Derridj91b}
\bysame, Microlocalisation et estimations pour $\overline{\partial}_{b}$ dans quelques hypersurfaces pseudoconvexes, \emph{Invent. Math.} \textbf{104} (1991), 631--642.

\bibitem{HarringtonRaich10}
Harrington, Phillip S. and Raich, Andrew, Regularity results for $\overline{\partial}_{b}$ on CR-manifolds of hypersurface type, \emph{Commun. Partial Diff. Equations}, to appear.

\bibitem{HarveyLawson75}
Harvey, F. Reese and Lawson, H. Blaine, Jr., On boundaries of complex analytic varieties I, \emph{Ann. of Math.} \textbf{102} (1975), 223--290.

\bibitem{Hormander65}
H\"{o}rmander, L., $L^{2}$ estimates and existence theorems for the $\overline{\partial}$ operator, \emph{Acta Math.} \textbf{113} (1965), 89--152.

\bibitem{Khanh10}
Khanh, Tran V., A general method of weights in the $\overline{\partial}$-Neumann problem, Diss. Universit\`{a} degli Studi di Padova, 2010, arXiv:1001.5093.

\bibitem{Koenig04}
Koenig, Kenneth D., A parametrix for the $\overline\partial$-Neumann problem on pseudoconvex domains of finite type, \emph{J. Funct. Anal.} \textbf{216}, no. 2 (2004), 243--302.

\bibitem{Kohn85}
Kohn, J.~J., Estimates for $\overline{\partial}_{b}$ on pseudoconvex CR manifolds, Proc. Symp. Pure Math. \textbf{43} (1985), 207--217.

\bibitem{Kohn86}
\bysame, The range of the tangential Cauchy-Riemann operator, \emph{Duke Math. J.} \textbf{53}, no.2  (1986), 525--545.

\bibitem{Kohn02}
\bysame, Superlogarithmic estimates on pseudoconvex domains and CR manifolds,  \emph{Ann.of Math.}(2) \textbf{156} (2002), 213--248.

\bibitem{KohnNicoara06}
Kohn, J.~J. and Nicoara, A.~C., The $\overline\partial\sb b$ equation on weakly pseudo-convex CR manifolds of dimension 3, \emph{J. Funct. Anal.} \textbf{230}, no.2  (2006), 251--272.

\bibitem{KohnNirenberg65}
Kohn, J.~J. and Nirenberg, L., \emph{Non-coercive boundary value
problems}, Comm. Pure Appl. Math. \textbf{18} (1965), 443--492.

\bibitem{LaxNirenberg66}
Lax, P.~D. and Nirenberg, L., On stability of difference schemes; a sharp form of G\aa rding's inequality, \emph{Commun. Pure and Applied Math.} \textbf{19}, No.4 (1966), 473--492.

\bibitem{MunasingheStraube07}
Munasinghe, Samangi and Straube, Emil J., Complex tangential flows and compactness of the $\overline{\partial}$-Neumann operator, \emph{Pacific J. Math.} \textbf{232}, Nr.2 (2007), 343--354.

\bibitem{Nicoara06}
Nicoara, Andreea C., Global regularity for $\overline\partial\sb b$ on weakly pseudoconvex CR manifolds, \emph{Adv. Math.} \textbf{199}, no.2  (2006), 356--447.

\bibitem{Raich10}
Raich, Andrew S., Compactness of the complex Green operator on CR-manifolds of hypersurface type, \emph{Math. Ann.} \textbf{348} (2010), 81--117.

\bibitem{RaichStraube08}
Raich, Andrew S. and Straube, Emil J., Compactness of the complex Green operator, \emph{Math. Res. Lett.} \textbf{15}, no.~4 (2008), 761--778.

\bibitem{ReedSimon80}
Reed, Michael and Simon, Barry, \emph{Methods of Modern Mathematical Physics, I: Functional Analysis}, revised and enlarged edition, Academic Press, 1980.

\bibitem{SahutogluStraube05}
\c{S}ahuto\u{g}lu, S\"{o}nmez and Straube, Emil J., Analytic discs, plurisubharmonic hulls, and non-compactness of the $\overline{\partial}$-Neumann operator, \emph{Math. Ann.} \textbf{334} (2006), 809--820.

\bibitem{Shaw85}
Shaw, Mei-Chi, $L\sp 2$-estimates and existence theorems for the tangential Cauchy-Riemann complex, \emph{Invent. Math.} \textbf{82}, no.1  (1985), 133--150.

\bibitem{Stein93}
Stein, Elias M., \emph{Harmonic Analysis}, Princeton University Press, Princeton, 1993.

\bibitem{Straube04}
Straube, Emil J., Geometric conditions which imply compactness of
the $\overline{\partial}$-Neumann operator, \emph{Ann. Inst. Fourier Grenoble} \textbf{54}, fasc. 3 (2004), 699--710.

\bibitem{Straube10a}
\bysame, \emph{Lectures on the $\mathcal{L}^{2}$-Sobolev Theory of the $\overline{\partial}$-Neumann Problem}, ESI Lectures in Mathematics and Physics, European Math. Society, Z\"{u}rich, 2010.

\bibitem{Straube10}
\bysame, The complex Green operator on CR-submanifolds of $\mathbb{C}^{n}$ of hypersurface type: compactness, preprint 2010, arXiv:1007.0775.

\bibitem{Trepreau86}
Tr\'{e}preau, J.~M., Sur le prolongement holomorphe des fonctions CR d\'{e}finies sur une hypersurface r\'{e}elle de classe $C^{2}$ dans $\mathbb{C}^{n}$, \emph{Invent. Math.} \textbf{83}, no.3 (1986), 583--592.

\bibitem{Zampieri08}
Zampieri, Giuseppe, \emph{Complex Analysis and CR-Geometry}, University Lecture Series 43, American Math. Soc., 2008.


\end{thebibliography}
\end{document}